\theoremstyle{plain}
\newtheorem{theorem}{Theorem}[section]
\newtheorem{lemma}[theorem]{Lemma}
\newtheorem{proposition}[theorem]{Proposition}
\theoremstyle{definition}
\newtheorem{definition}[theorem]{Definition}
\newtheorem{example}[theorem]{Example}
\theoremstyle{remark}
\providecommand{\set}[1]{\{#1\}}
\providecommand{\norm}[1]{\lVert#1\rVert}
\newcommand{\field}[1]{\mathbb{#1}}
\newcommand{\R}{\field{R}}
\newcommand{\Z}{\field{Z}}
\newcommand{\Vol}{\text{Vol}}
\renewcommand{\ker}{\operatorname{ker}}
\newcommand{\im}{\operatorname{im}}
\newcommand{\diam}{\operatorname{diam}}
\newcommand{\st}{\operatorname{st}}
\newcommand{\sd}{\operatorname{sd}}
\newcommand{\dist}{\operatorname{dist}}
\newcommand{\rad}{\operatorname{rad}}
\newcommand{\depth}{\operatorname{depth}}
\newcommand{\spn}{\operatorname{span}}
\title{A Cheeger-type inequality on simplicial complexes}
\author{John Steenbergen, Caroline Klivans, Sayan Mukherjee}
\address{Department of Mathematics, Duke University}
\address{Division of Applied Mathematics, Departments of Computer Science and Mathematics, Brown University}
\address{Departments of Statistical Science, Mathematics and Computer Science \\
Institute for Genome Sciences \& Policy, Duke University}
\date{\today}
\begin{document}

\begin{abstract}
In this paper, we consider a variation on Cheeger numbers related to the coboundary expanders recently defined by Dotterer and Kahle.  A Cheeger-type inequality is proved, which is similar to a result on graphs due to Fan Chung.  This inequality is then used to study the relationship between coboundary expanders on simplicial complexes and their corresponding eigenvalues, complementing and extending results found by Gundert and Wagner.  In particular, we find these coboundary expanders do not satisfy natural Buser or Cheeger inequalities.
\end{abstract}

\maketitle

\section{Introduction}
\subsection{Background}
The Cheeger inequality \cite{cheeger1970lower,buser36cheeger} is a classic result that relates the isoperimetric constant of a manifold
(with or without boundary) to the spectral gap of the Laplace-Beltrami operator. An analog of the manifold result was also found to hold
on graphs \cite{alon1985lambda,alon1986eigenvalues,mohar1989isoperimetric} and is a prominent result in spectral graph theory.
Given a graph $G$ with vertex set $V$, the Cheeger number is the following isoperimetric constant
\[ h := \min_{\emptyset \subsetneq S \subsetneq V} \frac{|\delta S|}{\min\set{|S|,|\overline{S}|}} \]
where $\delta S$ is the set of edges connecting a vertex in $S$ with a vertex in $\overline{S} = V \setminus S$. The Cheeger inequality on the graph relates the Cheeger number $h$ to the algebraic connectivity $\lambda$ \cite{fiedler1973algebraic} which is the the second eigenvalue of the graph Laplacian.  It states that
\[ 2h \geq \lambda \geq \frac{h^2}{2 \max_{v \in V} d_v} \]
where $d_v$ is the number of edges connected to vertex $v$ (also called the degree of the vertex). For more background on the Cheeger inequality see \cite{chung1997spectral}.

A key motivation for studying the Cheeger inequality has been understanding expander graphs \cite{hoory2006expander}  -- sparse graphs with strong connectivity properties. The edge expansion of a graph is the Cheeger number in these studies and expanders are families of regular graphs $\mathcal{G}$ of increasing size with the property $h(G) > \varepsilon$ for some fixed $\varepsilon > 0$ and all $G \in \mathcal{G}$.  A generalization of the Cheeger number to higher dimensions on simplicial complexes, based on ideas in \cite{linial2006homological,meshulam2009homological}, was
defined and expansion properties studied in \cite{dotterrer2010coboundary} via cochain complexes. In addition, it has long been known 
\cite{eckmann1944harmonische} that the graph Laplacian generalizes to higher dimensions on simplicial complexes.  In particular one can 
generalize the notion of algebraic connectivity to higher dimensions using the cochain complex and relate an eigenvalue of the $k$-dimensional Laplacian to the $k$-dimensional Cheeger number. This raises the question of whether the Cheeger inequality has a higher-dimensional analog.  

\subsection{Main Results}
In this paper we examine the combinatorial Laplacian which is derived from a chain complex and a cochain complex.  Precise definitions of the object studied
and the results are given in section 2. We first state our negative result -- for the cochain complex a natural Cheeger inequality does not hold. For an $m$-dimensional 
simplicial complex we denote $\lambda^{m-1}$ as the analog of the spectral gap for dimension $m-1$ on the cochain complex and we denote $h^{m-1}$ as the 
$(m-1)$-dimensional coboundary Cheeger number. In addition, let $S_k$ be the set of $k$-dimensional simplexes and for any $s \in S_k$ let $d_s$ be the number of 
$(k+1)$-simplexes incident to $s$. The following result is an informal statement of Proposition \ref{cochain} and implies that there exists no Cheeger inequality of the following form for
the cochain complex. Specifically, there are no constants $p_1, p_2, C$ such that either of the inequalities 
\[ C (h^{m-1})^{p_1} \geq \lambda^{m-1} \text{\qquad or \qquad} \lambda^{m-1} \geq \frac{C(h^{m-1})^{p_2}}{\max_{s \in S_{m-1}} d_s} \]
hold in general for an $m$-dimensional simplicial complex $X$ with $m>1$.  The case of $h^0$ and $\lambda^0$ with $p_1=1$ and $p_2 = 2$ reduces to the Cheeger inequality on the graph and the Cheeger inequality holds.

For the chain complex we obtain a positive result, there is a direct analogue for the Cheeger inequality in certain well-behaved cases.  Whereas the cochain complex is defined using the coboundary map, the chain
complex is defined using the boundary map.  Denote $\gamma_m$ as the analog of the spectral gap for dimension $m$ on the chain complex and $h_m$ as the $m$-dimensional Cheeger number defined using the boundary map.  If the $m$-dimensional simplicial complex $X$ is an orientable pseudomanifold or satisfies certain more general conditions, then
\[ h_m \geq \gamma_m \geq \frac{h_m^2}{2(m+1)}. \]
This inequality can be considered a discrete analog of the Cheeger inequality for manifolds with Dirichlet boundary condition \cite{cheeger1970lower,buser36cheeger}.

\subsection{Related Work}
A probabilistic argument was used by Gundert and Wagner \cite{gundert2012laplacians} to show on the cochain complex there exists infinitely many simplicial complexes
with  $h^{m-1} = 0$ and $\lambda^{m-1} > c$ for some fixed constant $c > 0$ -- implying that one side of the Cheeger inequality cannot hold in general. However,
this construction requires the complexes to have torsion in their integral homology groups due to the way $h^{m-1}$ and $\lambda^{m-1}$ relate to cohomology. In this paper
we show that even for torsion-free simplicial complexes there exist counterexamples that rule out both sides of a Cheeger inequality.

The analysis of the chain complex in our paper is related to a paper by Fan Chung \cite{chung2007random} which introduces a notion of a Cheeger number
on graphs with the analog of a Dirichlet boundary condition. We provide a detailed comparison on Appendix A.

Finally, it should be mentioned that the authors in \cite{parzanchevski2012isoperimetric} prove a two-sided Cheeger-type inequality for $\lambda^{m-1}$ using a modified higher-dimensional Cheeger number.  The modified Cheeger number used is nonzero only if the simplicial complex has complete skeleton, and the Cheeger side of the inequality includes an additive constant.

\section{Main Results}
\subsection{Simplicial Complexes}
Since the concept of a  Cheeger inequality is strongly associated to manifolds we focus in this paper on abstract simplicial complexes that are analogous to well-behaved manifolds. In particular, we will focus on simplicial complexes that have geometric realizations homeomorphic to a Euclidean ball $B^m:=\set{x \in \R^m : \norm{x}_2 \leq 1}$.  We will call such complexes simplicial $m$-balls

By a simplicial complex we always mean an abstract finite simplicial complex.  Simplicial complexes generalize the notion of a graph to higher dimensions.  Given a set of vertices $V$, any nonempty subset $\sigma \subseteq V$ of the form $\sigma = \set{v_0, v_1, \ldots, v_k}$ is called a $k$-dimensional simplex, or $k$-simplex.  A simplicial complex $X$ is a finite collection of simplexes of various dimensions such that $X$ is closed under inclusion, i.e., $\tau \subseteq \sigma$ and $\sigma \in X$ implies $\tau \in X$.

Given a simplicial complex $X$ denote the set of $k$-simplexes of $X$ as $S_k := S_k(X)$.  We call $X$ a simplicial $m$-complex if $S_m(X) \neq \emptyset$ but $S_{m+1}(X) = \emptyset$.  Given two simplexes $\sigma \in S_k$ and $\tau \in S_{k+1}$ such that $\sigma \subset \tau$, we call $\sigma$ a face of $\tau$ and $\tau$ a coface of $\sigma$.  Two $k$-simplexes are lower adjacent if they share a common face and are upper adjacent if they share a common coface.  

Every simplicial complex $X$ has associated with it a geometric realization denoted $|X|$.  The simplicial $m$-complex $\Sigma^m$ consisting of a single $m$-simplex and its subsets has geometric realization homeomorphic to $B^m$.  Thus, $\Sigma^m$ is an example of a simplicial $m$-ball.  A subdivision of a simplicial complex $X$ is a simplicial complex $X'$ such that $|X'| = |X|$ and every simplex of $X'$ is, in the geometric realization, contained in a simplex of $X$.  Thus, any subdivision of $\Sigma^m$ is also a simplicial $m$-ball.

There is another convenient set of criteria under which a simplicial complex is a simplicial $m$-ball.  A simplicial $m$-complex $X$ is constructible if either (1) $X = \Sigma^m$ or (2) $X$ can be decomposed into the union of two constructible simplicial $m$-subcomplexes $X = X_1 \cup X_2$ such that $X_1 \cap X_2$ is a constructible simplicial $(m-1)$-complex.  If every $s \in S_{m-1}$ has at most two cofaces then $X$ is said to be non-branching.  In this case, every $s \in S_{m-1}$ with exactly one coface is called a boundary face of $X$.  It is known \cite{bjorner1995topological} that a the geometric realization of a non-branching constructible simplicial $m$-complex $X$ is homeomorphic to $B^m$ if $X$ has at least one boundary face (otherwise it is homeomorphic to the sphere).

\subsection{Chain and Cochain Complexes}
Given a simplicial complex $X$ and any field $F$, we can define the chain and cochain complexes of $X$ over $F$. 
In this paper we consider the fields $\Z_2$ and $\R$.  Given a simplex $\sigma = \set{v_0, v_1, \ldots, v_k}$, $\sigma$ can be ordered as a set.  An orientation, denoted by $[v_0, v_1, \ldots, v_k]$ is an equivalence class of all even permutations of the given ordering.  There are always two orientations for $k > 0$.  The space of $k$-chains $C_k(F):=C_k(X;F)$ is the vector space of linear combinations of oriented $k$-simplexes with coefficients in $F$, with the stipulation that the two orientations of a simplex are negatives of each other in $C_k(F)$.  The space of $k$-cochains $C^k(F):=C^k(X;F)$ is then defined to be the vector space dual to $C_k(F)$.  These spaces are isomorphic and we will make no distinction between them.  The boundary map $\partial_k(F):C_k(F) \to C_{k-1}(F)$ is defined on the basis elements $[v_0, \ldots, v_k]$ as
\[ \partial_k[v_0, \ldots, v_k] = \sum_{i=0}^k (-1)^i[v_0, \ldots, v_{i-1}, v_{i+1}, \ldots, v_k] \]
The coboundary map $\delta^{k-1}(F):C^{k-1}(F) \to C^k(F)$ is then defined to be the transpose of the boundary map.  When there is no confusion, we will denote the boundary and coboundary maps by $\partial$ and $\delta$.  It is easy to see that $\partial \partial = \delta \delta = 0$, so that $(C_k(F), \partial_k)$ and $(C^k(F), \delta^k)$ form chain and cochain complexes.  See Figures \ref{boundarymap} and \ref{coboundarymap} for examples of $\partial$ and $\delta$ on real and $\Z_2$ chains/cochains.

\begin{figure}
\centering
\resizebox{\textwidth}{!}{\input{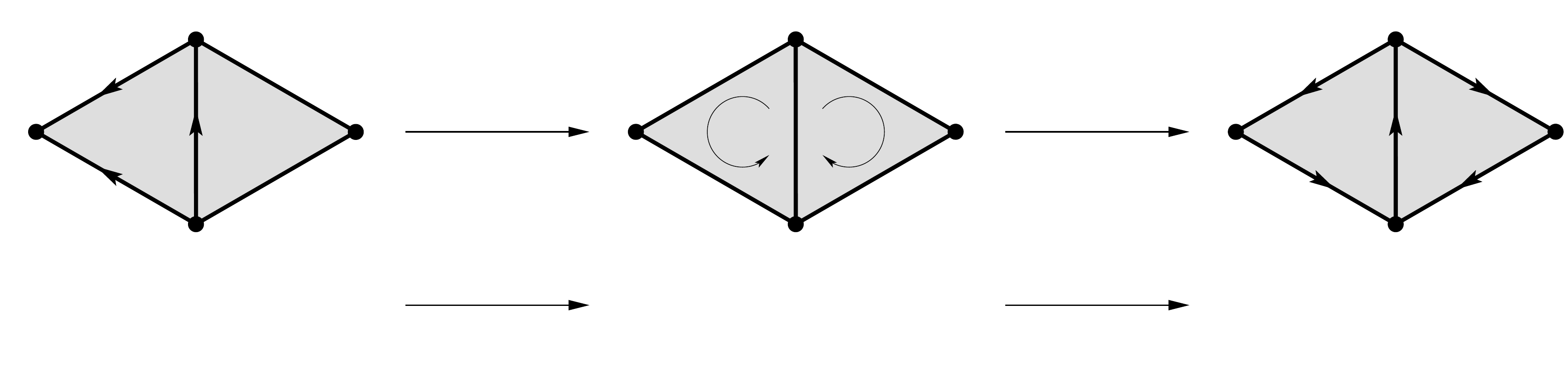_t}}\\
\caption{An example of $\partial(\R)$ and $\delta(\R)$.}
\label{boundarymap}
\end{figure}
\begin{figure}
\centering
\resizebox{\textwidth}{!}{\input{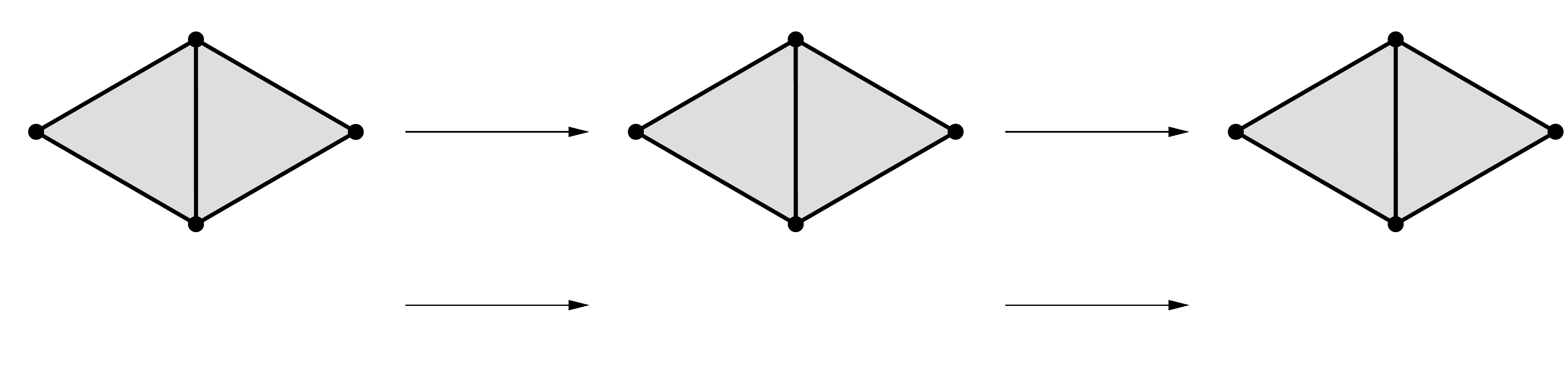_t}}\\
\caption{An example of $\partial(\Z_2)$ and $\delta(\Z_2)$.}
\label{coboundarymap}
\end{figure}

When $F = \Z_2$, positive and negative have no meaning and therefore no distinction is made between different orientations.  In particular, it is possible to identify $C_k(\Z_2)$ and $C_k(\Z_2)$ with $S_k$ as sets.  Throughout this paper, we will identify a $k$-chain/$k$-cochain $\phi$ over $\Z_2$ with the subset $\phi \subset S_k$ of $k$-simplexes to which $\phi$ assigns the coefficient 1.

The homology and cohomology vector spaces of $X$ over $F$ are 
\[ H_k(F):= H_k(X;F) = \frac{\ker \partial_k}{\im \partial_{k+1}} \text{\quad and \quad} H^k(F):= H^k(X;F) = \frac{\ker \delta^k}{\im \delta^{k-1}}. \]
It is known from the universal coefficient theorem that $H^k(F)$ is the vector space dual to $H_k(F)$.

\subsection{Laplacians and Eigenvalues}
The $k$-th Laplacian of $X$ is defined to be 
\[ L_k := L_k^\text{up} + L_k^\text{down} \]
where 
\[L_k^\text{up} = \partial_{k+1}(\R)\delta^k(\R) \text{\quad and \quad} L_k^\text{down} = \delta^{k-1}(\R)\partial_k(\R).\]
By way of Rayleigh quotients, the smallest nontrivial eigenvalue of $L_k^\text{up}$ and $L_k^\text{down}$ are given by
\[ \lambda^k  = \min_{\substack{f \in C^k(\R) \\ f \perp \im \delta}} \frac{\norm{\delta f}_2^2}{\norm{f}_2^2} = \min_{\substack{f \in C^k(\R) \\ f \notin \im \delta}} \frac{\norm{\delta f}_2^2}{\min_{g \in \im \delta} \norm{f+g}_2^2}, \]
\[ \lambda_k =  \min_{\substack{f \in C_k(\R) \\ f \perp \im \partial}} \frac{\norm{\partial f}_2^2}{\norm{f}_2^2} = \min_{\substack{f \in C_k(\R) \\ f \notin \im \partial}} \frac{\norm{\partial f}_2^2}{\min_{g \in \im \partial} \norm{f+g}_2^2}, \]
where $\norm{\cdot}_2$ denotes the Euclidean norm on both $C^k(\R)$ and $C_k(\R)$.  It is well known that the nonzero spectrum of $L_k$ is the union of the nonzero spectrum of $L_k^\text{up}$ with the nonzero spectrum of $L_k^\text{down}$.  Thus, the smallest nonzero eigenvalue of $L_k$ is either $\lambda^k$ or $\lambda_k$ assuming one of them is nonzero.  In addition, the nonzero spectrum of $L_k^\text{up}$ is the same as the nonzero spectrum of $L_{k+1}^\text{down}$.  Thus, $\lambda^k = \lambda_{k+1}$ whenever $\lambda^k, \lambda_{k+1}$ are both nonzero.

The relationship between eigenvalues and homology/cohomology is as follows:
\begin{center}
\begin{tabular}{c c c} 
$\lambda_k = 0$ & & $\lambda^k = 0$ \\ 
$\Updownarrow$ &\text{\quad and \quad} &$\Updownarrow$ \\ 
$H_k(\R) \neq 0$ & & $H^k(\R) \neq 0$. 
\end{tabular}
\end{center}
If we pass to the reduced cochain complex, $\lambda^0$ becomes the algebraic connectivity (or Fiedler number) of a graph \cite{fiedler1973algebraic} and $\lambda^0 = 0 \Leftrightarrow \widetilde{H}^0(\R) \neq 0$.

\subsection{Cheeger Numbers}
Higher-dimensional Cheeger numbers were first stated in \cite{dotterrer2010coboundary} to capture a higher-dimensional notion of
expanders. They are defined via the coboundary map as follows:
\begin{definition}
 Let $\norm{\cdot}$ denote the Hamming norm on $C^k(\Z_2)$.  The $k$-th (coboundary) Cheeger number of $X$ is 
\[ h^k := \min_{\substack{\phi \in C^k(\Z_2) \\ \phi \notin \im \, \delta}} \frac{\norm{\delta\phi}}{\min_{\psi \in \im \, \delta}\norm{\phi+\psi}}. \]
\end{definition}

A similar definition can be given for the boundary map.
\begin{definition}
Let $\norm{\cdot}$ also denote the Hamming norm on $C_k(\Z_2)$.
The $k$-th boundary Cheeger number of $X$ is 
\[ h_k := \min_{\substack{\phi \in C_k(\Z_2) \\ \phi \notin \im \, \partial}} \frac{\norm{\partial\phi}}{\min_{\psi \in \im \, \partial}\norm{\phi+\psi}}. \]
\end{definition}

The relationship between Cheeger numbers and homology/cohomology is as follows:
\begin{center}
\begin{tabular}{c c c} 
$h_k = 0$ & & $h^k = 0$ \\ 
$\Updownarrow$ &\text{\quad and \quad} &$\Updownarrow$ \\ 
$H_k(\Z_2) \neq 0$ & & $H^k(\Z_2) \neq 0$ .
\end{tabular}
\end{center}
If we pass to the reduced cochain complex, $h^0$ becomes the Cheeger number of a graph \cite{dotterrer2010coboundary}
and $h^0 = 0 \Leftrightarrow \widetilde{H}^0(\Z_2) \neq 0$.

Often, we speak of a cochain that attains the minimum in the definition of the Cheeger number  -- in the graph case these are  Cheeger cuts.  We will say that $\phi \in C^k(\Z_2)$ attains $h^k$ if $h^k = \frac{\norm{\delta \phi}}{\norm{\phi}}$.  The same terminology will be used for $h_k$.

\subsection{Additional Notation and Preliminary Results}

Here we collect some interesting results concerning Cheeger numbers which will be needed later in section \ref{main results}.  Lemma \ref{diam} says that $h_1$ has a very simple interpretation in terms of the diameter of the simplicial complex.  Lemma \ref{lem: X_k} says that $h^{m-1}$ also has a very simple interpretation in terms of the radius.  

We define the diameter of a simplicial $m$-complex $X$ as follows.  Given two vertices $v_1, v_2 \in S_0$, we define the distance between them to be the quantity
\[ \dist(v_1, v_2) := \min\set{\norm{\phi} : \phi \in C_1(\Z_2) \text{ and } \partial \phi = v_1 + v_2} \]
Any chain $\phi$ attaining the minimum is called a geodesic.  Note that for any geodesic $\phi$, $h_1 \leq \frac{2}{\norm{\phi}}$.  For our purposes, $\dist(v_1, v_2) = 0$ if $v_1$, $v_2$ are not in the same connected component.  The diameter of $X$ is then defined to be 
\[ \diam(X) := \max_{v_1, v_2 \in S_0} \dist(v_1, v_2). \]
As it turns out, $h_1$ is strongly related to the diameter of a simplicial complex.
\begin{lemma}\label{diam}
Given a simplicial $m$-complex $X$ with $m \geq 1$ and satisfying $H_1(\Z_2) = 0$, $h_1$ is attained by a geodesic and hence
\[ h_1 = \frac{2}{\diam(X)} \].
\end{lemma}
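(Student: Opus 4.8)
The plan is to prove the two inequalities $h_1 \le 2/\diam(X)$ and $h_1 \ge 2/\diam(X)$ separately, arranging the first so that it simultaneously exhibits the geodesic attaining the minimum. The key preliminary observation is that the hypothesis $H_1(\Z_2)=0$ says precisely that $\ker\partial_1 = \im\partial_2$. Hence for any $\psi \in \im\partial_2$ we have $\partial_1(\phi+\psi)=\partial_1\phi$, so the numerator $\norm{\partial\phi}$ in the definition of $h_1$ is constant on each coset $\phi+\im\partial_2$. This lets me replace every competitor $\phi$ by a minimal-weight representative of its coset, so that $\min_{\psi\in\im\partial_2}\norm{\phi+\psi}=\norm{\phi}$ and the Cheeger ratio reduces to $\norm{\partial\phi}/\norm{\phi}$, to be minimized over minimal-weight $\phi\notin\im\partial_2$.

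For the upper bound together with the attainment claim, I would take vertices $v_1,v_2$ realizing $\dist(v_1,v_2)=\diam(X)$ and let $\phi$ be a geodesic between them, so $\partial\phi=v_1+v_2$ and $\norm{\phi}=\diam(X)$. Since adding any element of $\im\partial_2=\ker\partial_1$ leaves the boundary equal to $v_1+v_2$, every chain in the coset $\phi+\im\partial_2$ also has boundary $v_1+v_2$ and thus weight at least $\dist(v_1,v_2)$; therefore $\phi$ is already a minimal-weight representative, and its Cheeger ratio is exactly $\norm{v_1+v_2}/\norm{\phi}=2/\diam(X)$. This gives $h_1 \le 2/\diam(X)$ and shows the value is attained by a geodesic.

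For the lower bound I would take an arbitrary minimal-weight $\phi\notin\im\partial_2$ and write $\partial\phi=w_1+\cdots+w_a$, the vertices meeting an odd number of edges of $\phi$. By the handshake lemma $a$ is even (and positive, as $\phi\notin\ker\partial_1$), and the odd-degree vertices can be paired off within each connected component; for each pair I pick a geodesic $\gamma_i$, so that $\norm{\gamma_i}=\dist(w_{2i-1},w_{2i})\le\diam(X)$. Then $\sum_{i=1}^{a/2}\gamma_i$ has boundary $w_1+\cdots+w_a=\partial\phi$, hence lies in the same coset as $\phi$; minimality of $\phi$ and subadditivity of the Hamming norm yield
\[ \norm{\phi} \le \norm{\textstyle\sum_{i=1}^{a/2}\gamma_i} \le \sum_{i=1}^{a/2}\norm{\gamma_i} \le \tfrac{a}{2}\diam(X), \]
so that $\norm{\partial\phi}/\norm{\phi}=a/\norm{\phi}\ge 2/\diam(X)$. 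Combining the two bounds gives $h_1=2/\diam(X)$, attained by a geodesic.

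The main obstacle is the lower-bound step: the real content is recognizing that a minimal-weight representative of a nontrivial coset can be \emph{rerouted} through geodesics joining its odd-degree vertices, converting a statement about edge-weight into one about the diameter. The delicate points to verify are that the number of odd-degree vertices is even (so the pairing exists) and that each pair lies in a common connected component (so its geodesic has weight at most $\diam(X)$); the convention $\dist=0$ across components is harmless here because the handshake parity holds component by component. Once the coset reformulation of the first paragraph is in place, the other direction is immediate.
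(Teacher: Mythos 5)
Your proof is correct and follows essentially the same route as the paper's: both arguments pair off the vertices of $\partial\phi$ (component by component, via the handshake parity), replace $\phi$ by a sum of geodesics with the same boundary, and use the hypothesis $H_1(\Z_2)=0$ to identify "minimal in its coset" with "minimal among chains with the same boundary." The only difference is organizational — you split the statement into two inequalities and exhibit the diametral geodesic explicitly for the upper bound, while the paper runs a single squeeze $h_1 \geq \cdots \geq h_1$ using the mediant inequality to get attainment; this is not a substantive divergence.
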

\begin{proof}
Suppose that $\phi \in C_1(\Z_2)$ attains $h_1$.  Clearly, $\norm{\partial \phi}$ must be even and nonzero.  What we will show is that we can assume $\norm{\partial \phi} = 2$.  Thinking of $\phi$ as a graph (consisting of the edges in $\phi$ and their vertices), it is also clear that every connected component $\phi_i$ of $\phi$ has $\norm{\partial \phi_i}$ even.  For every pair of vertices in $\partial \phi_i$, there exists a geodesic in $X$ with the given pair of vertices as its boundary.  Thus, there exist geodesics $\psi_1, \ldots, \psi_q$ such that $\partial \psi_j$ is a distinct pair of vertices in $\partial \phi$ for all $j$ and $\partial (\psi_1 + \cdots + \psi_q) = \partial \phi$.  Since $\phi$ attains $h_1$ and $H_1(\Z_2) = 0$, 
\[ \norm{\phi} = \min_{\psi \in \im \partial}\norm{\phi + \psi} = \min_{\partial \psi = \partial \phi} \norm{\psi} \] 
In other words, $\phi$ is a 1-chain of smallest norm with boundary $\partial \phi$.  Thus, $\norm{\psi_1 + \cdots + \psi_q} \geq \norm{\phi}$.  Now,
\begin{align*}
h_1 &= \frac{\norm{\partial \phi}}{\norm{\phi}} \\
    &\geq \frac{\norm{\partial (\psi_1 + \cdots + \psi_q)}}{\norm{\psi_1 + \cdots + \psi_q}} \\
    &\geq \frac{2 + \cdots + 2}{\norm{\psi_1} + \cdots + \norm{\psi_q}} \\
    &\geq \min\left\{ \frac{2}{\norm{\psi_1}}, \ldots, \frac{2}{\norm{\psi_q}} \right\} \\
    &\geq h_1
\end{align*}
and therefore $h_1 = \min\big\{ \frac{2}{\norm{\psi_1}}, \ldots, \frac{2}{\norm{\psi_q}} \big\}$.  Here we are using the general inequality $\frac{a_1 + a_2 + \cdots + a_k}{b_1 + b_2 + \cdots + b_k} \geq \min_i \frac{a_i}{b_i}$, valid for all $a_1, \ldots, a_k, b_1, \ldots, b_k > 0$.  Hence, $h_1 = \frac{2}{\norm{\psi_j}}$ for some geodesic $\psi_j$.  This completes the proof.
\end{proof}

While the diameter is defined in terms of 1-chains, we define the radius in terms of $(m-1)$-cochains as follows.  Given a simplicial $m$-complex $X$, we define the depth of an $m$-simplex $\sigma$ to be
\[ \depth(\sigma) := \min \set{\norm{\phi} : \phi \in C^{m-1}(\Z_2), \delta \phi = \sigma}. \]
Any minimizing $\phi$ will be said to be a depth-attaining cochain for $\sigma$.  Note that for any such $\phi$, $h^{m-1} \leq \frac{1}{\norm{\phi}}$.  All $m$-simplexes have a defined depth when $H_m(\Z_2)$ is trivial.  In this case, we define the radius of $X$ to be 
\[ \rad(X) := \max_{\sigma \in S_m} \depth(\sigma). \]

Depth-attaining cochains have a very predictable structure for non-branching simplicial complexes, a fact which we will use later in proving Proposition \ref{cochain}.  Roughly speaking, Lemma \ref{lem: depth-attaining cochains} says that if $\phi$ is depth-attaining for $\sigma$, then $\phi$ is a linear non-intersecting sequence of $(m-1)$-simplexes starting with a face of $\sigma$ and ending with a boundary face.  For the statement and proof of this Lemma we define the star $\st(s)$ of a simplex $s$ to be the set of cofaces of $s$. 

\begin{lemma} \label{lem: depth-attaining cochains}
Let $X$ be a simplicial $m$-complex such that every $s \in S_{m-1}$ has at most two cofaces.  Suppose that $\sigma \in S_m$ has depth $d$ and $\phi$ is a depth-attaining cochain for $\sigma$.  Then there is a sequence $s_1, s_2, \ldots, s_d$ of distinct $(m-1)$-simplexes and a sequence $\sigma = \sigma_1, \sigma_2, \ldots, \sigma_d$ of distinct $m$-simplexes satisfying
\begin{enumerate}
 \item $\phi = \sum_{i=1}^d s_i$,
 \item $\st(s_i) = \set{\sigma_i, \sigma_{i+1}}$ for $i < d$,
 \item $\st(s_d) = \set{\sigma_d}$.
\end{enumerate}
\end{lemma}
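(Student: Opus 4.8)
The plan is to encode the combinatorial data of $\phi$ as a graph on the $m$-simplexes and then use minimality of $\norm{\phi} = d$ to force that graph to be a single path. First I would observe, using the identification of $\Z_2$-cochains with subsets of simplexes, that $\delta\phi = \sigma$ means precisely that $\sigma$ has an odd number of faces lying in $\phi$ while every other $m$-simplex has an even number of faces in $\phi$. I would also record a consequence of minimality used repeatedly: no $s \in \phi$ can be a face of zero $m$-simplexes, since deleting such an $s$ would preserve $\delta\phi$ but lower the norm; hence every element of $\phi$ has one or two cofaces.

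Next I would build a multigraph $\Gamma$ whose vertices are the $m$-simplexes incident to $\phi$ together with one extra ``boundary'' vertex $b$, and whose edges are the elements of $\phi$: an $s \in \phi$ with two cofaces $\set{\tau, \tau'}$ becomes the edge $\tau\tau'$, and a boundary face $s \in \phi$ with single coface $\tau$ becomes the edge $\tau b$. By construction $\Gamma$ has exactly $d$ edges, and the degree of an $m$-simplex vertex $\tau$ equals the number of its faces lying in $\phi$. Hence the reformulation of $\delta\phi = \sigma$ says that $\sigma$ is the unique $m$-simplex vertex of odd degree; since a graph has an even number of odd-degree vertices, $b$ must also have odd degree, and in particular $\phi$ contains at least one boundary face.

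The crux is to show $\Gamma$ is a forest, and here minimality does the work. Given any cycle $C$ in $\Gamma$, let $T \subseteq \phi$ be its edge set and consider the cochain $\phi + T = \phi \setminus T$. The coefficient of an $m$-simplex $\tau$ in $\delta(T)$ is the number of $C$-edges incident to $\tau$ modulo $2$, which is the degree of $\tau$ in $C$; as $C$ is a cycle, and $b$ contributes to no simplex coefficient, this degree is even for every real vertex, so $\delta(T) = 0$ and $\delta(\phi \setminus T) = \sigma$ still holds. This contradicts minimality of $\norm{\phi}$, so $\Gamma$ is acyclic. A forest in which the only odd-degree vertices are $\sigma$ and $b$ must consist of a single nontrivial tree, since each nontrivial tree component contributes at least two odd-degree leaves and only two odd-degree vertices are available; that tree therefore contains both $\sigma$ and $b$, its leaves lie in $\set{\sigma, b}$, and a tree with exactly two leaves is a path. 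Reading the vertices of this path in order from $\sigma$ to $b$ yields the distinct $m$-simplexes $\sigma = \sigma_1, \ldots, \sigma_d$ and the distinct edges $s_1, \ldots, s_d$ with $\st(s_i) = \set{\sigma_i, \sigma_{i+1}}$ for $i < d$ and $\st(s_d) = \set{\sigma_d}$, as required.

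The main obstacle I anticipate is the careful bookkeeping around the fictitious vertex $b$: one must check that treating boundary faces as edges to $b$ is consistent both in the degree/parity count and, crucially, in the computation $\delta(T) = 0$ for cycles that pass through $b$. The graph-theoretic facts (handshake parity, and that a tree with two leaves is a path) are routine, so the real content is verifying that deleting cycle edges is a legitimate $\Z_2$ move preserving $\delta\phi = \sigma$.
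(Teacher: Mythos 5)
Your proof is correct, and it takes a genuinely different route from the paper's. The paper argues by a greedy inductive walk: it starts with a face $s_1$ of $\sigma$, extends the sequence one step at a time (each new $\sigma_k$ must have another face in $\phi$ or else $\delta\phi \neq \sigma$), and invokes minimality exactly at the point where the walk threatens to revisit an earlier $\sigma_i$, since the revisit would produce a cocycle $s_i + \cdots + s_k$ that could be deleted; termination at a boundary face is then implicit in the induction. You instead establish the global structure of the incidence multigraph $\Gamma$ up front: minimality forbids isolated edges and cycles (any cycle, including one through the auxiliary vertex $b$ or a pair of parallel edges, has $\delta(T)=0$ and could be removed), while the parity reformulation of $\delta\phi=\sigma$ plus the handshake lemma forces exactly two odd-degree vertices, $\sigma$ and $b$, so the forest collapses to a single $\sigma$--$b$ path. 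Your version buys a cleaner separation of the two uses of minimality and gets the existence of a boundary face for free from parity, rather than from an implicit ``the process must terminate'' argument; the paper's version is more hands-on and produces the ordering of the $s_i$ directly without any graph-theoretic scaffolding. The one point worth stating explicitly in a write-up is that the hypothesis that each $(m-1)$-simplex has at most two cofaces is exactly what lets each element of $\phi$ be encoded as a single edge of $\Gamma$, but you use this correctly.
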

\begin{proof}
Assume $\phi = \sum_{i=1}^d s_i$. Clearly, at least one of the $s_i$ must have $\sigma$ as a coface, so WLOG we can assume $s_1$ has $\sigma = \sigma_1$ as a coface.  If $s_1$ is a boundary face, we are done and $d=1$.  If not, then $s_1$ has another coface $\sigma_2$.  In this case, if there are no other $s_i$ with $\sigma_2$ as a coface then we arrive at the contradiction that $\delta \phi$ contains $\sigma_2$, i.e., $\delta \phi \neq \sigma$.  Thus, there is another $s_i$ with $\sigma_2$ as a coface, which we can assume WLOG is $s_2$.  

We proceed by induction.  Suppose that for $k > 1$ there is a sequence $\sigma_1, \sigma_2, \ldots, \sigma_{k}$ of distinct $m$-simplexes such that $\delta(s_1 + \cdots + s_{k-1}) = \sigma + \sigma_k$ where $\st(s_i) = \set{\sigma_i, \sigma_{i+1}}$ for all $i$.  Then we can find another $s_i$, $i > k$, which we can assume WLOG is $s_k$ and which has $\sigma_k$ as a coface.  If no such $s_i$ exists then $\delta \phi \neq \sigma$.  If $s_k$ is a boundary face we are done and $d = k$.  If $s_k$ has $\sigma_{k+1}$ as a second coface and $\sigma_{k+1} = \sigma_i$ for some $i < k$ then $s_i + \ldots + s_k$ is a cocycle, but this means that $\delta(\phi - s_i - \cdots - s_k) = \sigma$ so  $\phi$ is not depth-attaining.  Otherwise, $\sigma_1, \sigma_2, \ldots, \sigma_{k+1}$ is a sequence of distinct $m$-simplexes such that  $\delta(s_1 + \cdots + s_k) = \sigma + \sigma_{k+1}$ where $\st(s_i) = \set{\sigma_i, \sigma_{i+1}}$ for all $i$.  This leaves us back where we started.  By induction, we can continue this process until $k=d$ and $s_d$ is a boundary face.  
\end{proof}

\begin{lemma} \label{lem: X_k}
Let $X$ be a simplicial $m$-complex with $H^{m-1}(\Z_2) = 0$ and $H_m(\Z_2) = 0$.  Then $h^{m-1}$ is attained by a depth-attaining cochain and hence
\[ h^{m-1} = \frac{1}{\rad(X)}. \]
\end{lemma}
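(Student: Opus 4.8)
The plan is to mirror the proof of Lemma \ref{diam}, transporting the argument from the boundary map on $1$-chains to the coboundary map on $(m-1)$-cochains. The two hypotheses play complementary roles: $H^{m-1}(\Z_2) = 0$ will let me rewrite the denominator in the definition of $h^{m-1}$ as a minimum over cochains sharing a fixed coboundary, while $H_m(\Z_2) = 0$ will guarantee that every $m$-simplex has finite depth, so that $\rad(X)$ is defined and depth-attaining cochains exist. Once both reductions are in place, the estimate is the same mediant-plus-subadditivity calculation used for $h_1$.

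First I would unwind the definitions. Because $H^{m-1}(\Z_2) = 0$ we have $\ker \delta = \im \delta$ in degree $m-1$, so every $\psi \in \im \delta$ satisfies $\delta\psi = 0$ and, conversely, any two cochains with the same coboundary differ by an element of $\im \delta$ (over $\Z_2$, difference and sum coincide). Hence $\min_{\psi \in \im \delta}\norm{\phi+\psi} = \min\set{\norm{\phi'} : \delta\phi' = \delta\phi}$, and $h^{m-1}$ is realized by a cochain $\phi \notin \im \delta$ of minimal Hamming norm among all cochains with coboundary $\delta\phi$. Next, since $H_m(\Z_2) = 0$ is dual to $H^m(\Z_2) = 0$ and there are no $(m+1)$-simplexes, $\im \delta = C^m(\Z_2)$; thus every $m$-simplex $\sigma$ lies in $\im \delta$, so $\depth(\sigma)$ is finite and $\rad(X)$ is well defined. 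The upper bound $h^{m-1} \le 1/\rad(X)$ is then immediate: choosing $\sigma^*$ with $\depth(\sigma^*) = \rad(X)$ and a depth-attaining cochain $\phi^*$ for it gives $\delta\phi^* = \sigma^*$, so $\norm{\delta\phi^*} = 1$ while the minimal denominator is $\rad(X)$, yielding ratio $1/\rad(X)$.

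For the reverse inequality and the attainment claim I would run the telescoping argument. Let $\phi$ attain $h^{m-1}$, taken of minimal norm in its coset, and write $\delta\phi = \sigma_1 + \cdots + \sigma_r$ as a sum of $r = \norm{\delta\phi}$ distinct $m$-simplexes. For each $j$ pick a depth-attaining cochain $\psi_j$ with $\delta\psi_j = \sigma_j$ and $\norm{\psi_j} = \depth(\sigma_j)$. Then $\delta(\psi_1 + \cdots + \psi_r) = \delta\phi$, so minimality of $\phi$ together with subadditivity of the Hamming norm gives $\norm{\phi} \le \norm{\psi_1 + \cdots + \psi_r} \le \sum_j \norm{\psi_j}$, and exactly as in Lemma \ref{diam},
\begin{align*}
h^{m-1} = \frac{\norm{\delta\phi}}{\norm{\phi}} &\ge \frac{\norm{\delta(\psi_1 + \cdots + \psi_r)}}{\norm{\psi_1 + \cdots + \psi_r}} \ge \frac{1 + \cdots + 1}{\norm{\psi_1} + \cdots + \norm{\psi_r}}\\
&\ge \min_j \frac{1}{\norm{\psi_j}} \ge h^{m-1},
\end{align*}
the final step holding because each $\psi_j$ is a valid competitor in the definition of $h^{m-1}$. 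Equality throughout forces $h^{m-1} = 1/\norm{\psi_{j_0}} = 1/\depth(\sigma_{j_0})$ for some $j_0$, so $h^{m-1}$ is attained by the depth-attaining cochain $\psi_{j_0}$; combining $1/\depth(\sigma_{j_0}) \ge 1/\rad(X)$ with the upper bound gives $h^{m-1} = 1/\rad(X)$.

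I expect the only delicate points to be bookkeeping ones: justifying the rewriting of the denominator via $H^{m-1}(\Z_2) = 0$, and correctly invoking the $H_m(\Z_2) = 0 \Leftrightarrow H^m(\Z_2) = 0$ duality to produce a depth-attaining cochain for each summand $\sigma_j$. These are where the hypotheses are genuinely used; once they are secured, no new idea beyond dualizing Lemma \ref{diam} is required.
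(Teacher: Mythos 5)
Your proposal is correct and follows essentially the same route as the paper: decompose $\delta\phi$ into distinct $m$-simplexes, replace $\phi$ by a sum of depth-attaining cochains for those simplexes, and apply the mediant inequality $\frac{\sum a_i}{\sum b_i} \geq \min_i \frac{a_i}{b_i}$ to conclude that $h^{m-1}$ is attained by one of them. You are merely more explicit than the paper about the bookkeeping --- how $H^{m-1}(\Z_2)=0$ lets the denominator be read as a minimum over cochains with the same coboundary, and how $H_m(\Z_2)=0$ guarantees every depth is finite --- which the paper leaves implicit.
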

\begin{proof}
Suppose $\psi$ attains $h^{m-1}$ and $\delta \psi$ is a sum of distinct $m$-simplexes $\sigma_1, \ldots, \sigma_q$ with depth-attaining cochains $\psi_1, \ldots, \psi_q$.  Clearly $\norm{\psi} \leq \norm{\psi_1} + \cdots + \norm{\psi_q}$, so 
\begin{align*}
h^{m-1} &= \frac{q}{\norm{\psi}} \\
        &\geq \frac{1+\cdots+1}{\norm{\psi_1} + \cdots + \norm{\psi_q}} \\
        &\geq \min\left\{ \frac{1}{\norm{\psi_1}}, \ldots, \frac{1}{\norm{\psi_q}} \right\} \\
        &\geq h^{m-1}
\end{align*}
and therefore $h^{m-1} = \min \big\{ \frac{1}{\norm{\psi_1}}, \ldots, \frac{1}{\norm{\psi_q}} \big\}$.  Here we are using the general inequality $\frac{a_1 + a_2 + \cdots + a_k}{b_1 + b_2 + \cdots + b_k} \geq \min_i \frac{a_i}{b_i}$, valid for all $a_1, \ldots, a_k, b_1, \ldots, b_k > 0$.  Hence, $h^{m-1} = \frac{1}{\norm{\psi_j}}$ for some depth-attainng cochain $\psi_j$.  This completes the proof.
\end{proof}

An interesting result which will not be used in this paper is a Cheeger-type inequality for the special case $X = \Sigma^m$.
\begin{lemma} Recall $\Sigma^m$ is the simplicial complex induced by an $m$-simplex.  The following holds for all $k$.
 \begin{enumerate}
  \item $h^k(\Sigma^{m-1}) \geq \frac{m}{k+2}$
  \item $h_k(\Sigma^{m-1}) \geq \frac{m}{m-k}$.
 \end{enumerate}
\end{lemma}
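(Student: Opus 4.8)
The plan is to prove (1) directly by a cone (contracting–homotopy) argument on the complete complex, and then to deduce (2) from (1) by combinatorial Alexander duality on the simplex. Write $V$ for the vertex set, $\abs{V}=m$, so that $\Sigma^{m-1}$ has as its $j$-faces exactly the $(j+1)$-subsets of $V$; as in the rest of the paper I identify each $\Z_2$-cochain with the set of faces on which it takes the value $1$. Since $\Sigma^{m-1}$ is contractible, $\widetilde{H}^j(\Z_2)=0$ for all $j$, so for $j\geq 1$ the reduced and unreduced settings agree and the condition $\phi\notin\im\delta$ is simply $\delta\phi\neq 0$; for $j=0$ one works in the augmented complex (this is the usual passage to reduced cohomology that already turns $h^0$ into the graph Cheeger number).

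For (1), fix a vertex $v$ and define the localization $B_v:C^j(\Z_2)\to C^{j-1}(\Z_2)$ by $B_v\phi=\set{\sigma\setminus v:\sigma\in\phi,\ v\in\sigma}$, i.e.\ $(B_v\phi)(\eta)=\phi(\eta\cup v)$ for $v\notin\eta$. The key step is the $\Z_2$ cone identity
\[ \phi = \delta(B_v\phi) + B_v(\delta\phi), \]
valid for every $\phi\in C^k(\Z_2)$ and every $v$. I would verify this facewise: for a $k$-face $\rho$, the case $v\in\rho$ gives $(B_v\delta\phi)(\rho)=0$ and $(\delta B_v\phi)(\rho)=\phi(\rho)$, while the case $v\notin\rho$ gives $(B_v\delta\phi)(\rho)=\phi(\rho)+\sum_{w\in\rho}\phi((\rho\setminus w)\cup v)$ and $(\delta B_v\phi)(\rho)=\sum_{w\in\rho}\phi((\rho\setminus w)\cup v)$, so the off-diagonal terms cancel in pairs over $\Z_2$ and the sum collapses to $\phi(\rho)$; this is exactly the contracting homotopy of the cone with apex $v$. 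Granting the identity, $\phi+\delta(B_v\phi)=B_v(\delta\phi)$ exhibits $\phi$ as coboundary-equivalent to $B_v(\delta\phi)$, whence
\[ \min_{\psi\in\im\delta}\norm{\phi+\psi}\ \leq\ \norm{B_v(\delta\phi)}\ =\ \#\set{\tau\in\delta\phi:v\in\tau}, \]
the last equality because $\tau\mapsto\tau\setminus v$ is injective on faces containing $v$. Summing over all $v\in V$ and swapping the order of counting, and using that each $(k+1)$-face $\tau\in\delta\phi$ has exactly $k+2$ vertices, yields
\[ m\cdot\min_{\psi\in\im\delta}\norm{\phi+\psi}\ \leq\ \sum_{v\in V}\#\set{\tau\in\delta\phi:v\in\tau}\ =\ (k+2)\,\norm{\delta\phi}, \]
so $\norm{\delta\phi}/\min_{\psi}\norm{\phi+\psi}\geq m/(k+2)$ for every $\phi\notin\im\delta$, which is (1).

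For (2) I would set up the complementation map $*:C_k(\Z_2)\to C^{m-2-k}(\Z_2)$, $*\sigma=V\setminus\sigma$. This is a Hamming isometry (a bijection between $(k+1)$-subsets and their complements) and a direct check gives $*\,\partial_k=\delta^{m-2-k}\,*$, since $*\partial\sigma=\sum_{w\in\sigma}\big((V\setminus\sigma)\cup w\big)=\delta(*\sigma)$. Consequently $*$ carries $\im\partial_{k+1}$ onto $\im\delta^{m-3-k}$ and sends $\phi\notin\im\partial$ to $*\phi\notin\im\delta$, so the boundary Cheeger ratio of $\phi$ equals the coboundary Cheeger ratio of $*\phi$ in dimension $m-2-k$. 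Hence $h_k(\Sigma^{m-1})=h^{m-2-k}(\Sigma^{m-1})$, and applying (1) with $k$ replaced by $m-2-k$ gives $h_k\geq m/\big((m-2-k)+2\big)=m/(m-k)$.

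I expect the main obstacle to be the careful facewise verification of the cone identity $\phi=\delta B_v\phi+B_v\delta\phi$ — tracking which faces contain $v$ and checking the $\Z_2$ cancellations — together with the bookkeeping at the extreme dimensions $k=0$ and $k=m-1$, where $B_v$ and $*$ land in $(-1)$-dimensional cochains and one must invoke the augmented/reduced complex so that the operators target the correct cochain groups. The averaging/double-counting step and the intertwining $*\partial=\delta*$ are then routine once these conventions are fixed.
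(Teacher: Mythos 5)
Your proof is correct, and it is more self-contained than what the paper actually provides: the paper gives no argument for this lemma at all, instead citing Meshulam and Wallach for part (1) and asserting that their proof ``can be easily modified'' for part (2). Your part (1) is precisely a reconstruction of the Meshulam--Wallach argument --- the contracting homotopy $\phi = \delta(B_v\phi) + B_v(\delta\phi)$ of the cone with apex $v$, followed by averaging over the $m$ choices of apex --- and your facewise verification and the double-counting step both check out. For part (2) you diverge from the route the paper gestures at: the ``easy modification'' would be to run the same homotopy on the chain side with the coning operator $A_v\phi=\set{\sigma\cup v : \sigma\in\phi,\ v\notin\sigma}$, the identity $\phi=\partial(A_v\phi)+A_v(\partial\phi)$, and the count that each $(k-1)$-face omits exactly $m-k$ vertices; you instead deduce (2) from (1) by the complementation isometry $*\sigma=V\setminus\sigma$ with $*\,\partial=\delta\,*$, which is clean and arguably more illuminating since it shows $h_k(\Sigma^{m-1})=h^{m-2-k}(\Sigma^{m-1})$ exactly rather than just reproving a bound. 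The price of the duality route is the bookkeeping at the extreme dimensions ($B_v$ and $*$ landing in $C^{-1}$, and the fact that the stated definitions give $h^0=h_0=0$ unless one passes to the augmented/reduced complex, as the paper itself does for $h^0$); you flag this correctly, and with those conventions fixed the argument is complete.
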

The reason this result is Cheeger-type is because all the Laplacian eigenvalues of all dimensions for $\Sigma^{m-1}$ are equal to $m$ (this is easily seen from the characterization of the Laplacian in \cite{muhammad2006control}).  Part (1) of this Lemma was proved by Meshulam and Wallach \cite{meshulam2009homological} (who, even though they did not define the Cheeger number, still worked with its numerator and denominator separately).  Their proof can be easily modified to prove part (2) of the Lemma.

\subsection{Main Results}\label{main results}

We now state the main results of this paper -- there exists a Cheeger-type inequality in the top dimension for the chain complex but not for the cochain complex.

To state the results we need the following notion of orientational similarity.  Two oriented lower adjacent $k$-simplexes are dissimilarly oriented if they induce the same orientation on the common face.  In other words, if $\sigma = [v_0, \ldots, v_k]$ and $\tau = [w_0, \ldots, w_k]$ share the face $\set{u_0, \ldots, u_{k-1}}$, then $\sigma$ and $\tau$ are dissimilarly oriented if $\partial(\R) \sigma$ and $\partial(\R) \tau$ assign the same coefficient ($+1$ or $-1$) to the oriented simplex $[u_0, \ldots, u_{k-1}]$.  Otherwise, they are said to be similarly oriented.  If $X$ is a simplicial $m$-complex and all its $m$-simplices can be oriented similarly, then $X$ is called orientable.

 We first state the positive result -- there is a Cheeger-type inequality for the chain complex. 
 \begin{theorem}\label{Buser_Cheeger}
  Let $X$ be a simplicial $m$-complex, $m > 0$. 
  \begin{enumerate}
  \item[(1)] Let $\phi \in C_m(\Z_2)$ minimize the quotient in 
  \[ h_m := \min_{\substack{\phi \in C_m(\Z_2) \\ \phi \notin \im \, \partial}} \frac{\norm{\partial\phi}}{\min_{\psi \in \im \, \partial}\norm{\phi+\psi}} .\]
 If all $m$-simplexes in $\phi$ can be similarly oriented, then $h_m \geq \lambda_m$.
 \item[(2)] Assume that every $(m-1)$-dimensional simplex is incident to at most two $m$-simplexes.  Then
\[ \lambda_m \geq \frac{h_m^2}{2(m+1)}. \]
  \end{enumerate}
 
 \end{theorem}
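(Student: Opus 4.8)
The plan is to treat the two parts as the easy and hard halves of the classical Cheeger argument, transplanted to the top-dimensional chain complex. The key simplification is that in dimension $m$ there are no $(m+1)$-simplexes, so $\im \partial_{m+1} = 0$; consequently the normalizing quantities $\min_{\psi \in \im \partial}\norm{\phi+\psi}$ and $\min_{g \in \im \partial}\norm{f+g}_2$ appearing in $h_m$ and $\lambda_m$ both collapse to the plain norm of the chain, and $\lambda_m = \min_{f \neq 0}\norm{\partial f}_2^2/\norm{f}_2^2$.

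For part (1), the Buser-type direction, I would convert the $\Z_2$-chain $\phi$ attaining $h_m$ into a real test chain. Orienting the $m$-simplexes of $\phi$ similarly (possible by hypothesis), set $f = \sum_{\sigma \in \phi}\sigma \in C_m(\R)$, the signed indicator of $\phi$. The decisive observation is that similar orientation forces the two simplexes sharing any given $(m-1)$-face to contribute opposite $\pm 1$ coefficients to that face in $\partial f$, so those contributions cancel; hence a face survives in $\partial f$, with coefficient $\pm 1$, exactly when it lies in a single simplex of $\phi$, which is precisely when it belongs to the $\Z_2$-boundary $\partial\phi$. This yields $\norm{\partial f}_2^2 = \norm{\partial\phi}$ and $\norm{f}_2^2 = \norm{\phi}$, so the real Rayleigh quotient of $f$ equals $h_m$ on the nose. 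Feeding $f$ into the variational characterization of $\lambda_m$ gives $\lambda_m \le \norm{\partial f}_2^2/\norm{f}_2^2 = h_m$.

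For part (2), the Cheeger direction, I would run a co-area/sweep argument. Let $f$ realize $\lambda_m$, so $\norm{\partial f}_2^2/\norm{f}_2^2 = \lambda_m$ (if $\lambda_m = 0$ the bound is trivial, since then $H_m(\R)\neq 0$ forces $H_m(\Z_2) \neq 0$ and $h_m = 0$). For $t \ge 0$ put $\phi_t = \set{\sigma : f(\sigma)^2 > t} \in C_m(\Z_2)$. Layer-cake integration gives $\int_0^\infty \norm{\phi_t}\,dt = \norm{f}_2^2$, and, using the non-branching hypothesis that each $(m-1)$-face $s$ has at most two cofaces $\sigma,\tau$, the face $s$ lies in $\partial\phi_t$ exactly for $t$ strictly between $f(\tau)^2$ and $f(\sigma)^2$, whence $\int_0^\infty \norm{\partial\phi_t}\,dt = \sum_s \abs{f(\sigma)^2 - f(\tau)^2}$ (with $f$ read as $0$ on the absent coface of a boundary face). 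Writing $c_s$ for the coefficient of $s$ in $\partial f$ and $a,b$ for the signed values making up $c_s = a+b$, the identity $\abs{f(\sigma)^2 - f(\tau)^2} = \abs{a^2 - b^2} = \abs{c_s}\,\abs{a-b} \le \abs{c_s}\bigl(\abs{f(\sigma)} + \abs{f(\tau)}\bigr)$, followed by Cauchy--Schwarz and $(\abs{f(\sigma)}+\abs{f(\tau)})^2 \le 2(f(\sigma)^2 + f(\tau)^2)$, gives $\sum_s \abs{f(\sigma)^2 - f(\tau)^2} \le \norm{\partial f}_2\,\sqrt{2(m+1)}\,\norm{f}_2$, where the factor $m+1$ arises because each $m$-simplex has $m+1$ faces. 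An averaging argument then produces a threshold $t^*$ with $\phi_{t^*} \neq 0$ and $\norm{\partial\phi_{t^*}}/\norm{\phi_{t^*}} \le \sqrt{2(m+1)}\,\norm{\partial f}_2/\norm{f}_2 = \sqrt{2(m+1)\lambda_m}$; since this ratio is at least $h_m$, we conclude $h_m^2 \le 2(m+1)\lambda_m$.

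The step I expect to be the main obstacle is controlling orientations in part (2): because $X$ is not assumed orientable, the coefficient $c_s$ may equal $\pm f(\sigma) \pm f(\tau)$ with a sign I cannot pin down, so the sweep numerator $\abs{f(\sigma)^2 - f(\tau)^2}$ cannot simply be identified with the squared difference $\abs{f(\sigma) - f(\tau)}^2$ as in the graph case. The factorization $\abs{a^2 - b^2} = \abs{c_s}\,\abs{a-b}$ is what circumvents this, letting the genuine boundary coefficient $c_s$ carry the spectral content $\norm{\partial f}_2$ while the crude estimate $\abs{a-b} \le \abs{f(\sigma)} + \abs{f(\tau)}$ absorbs the sign ambiguity at the cost only of the harmless factor. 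A secondary point to verify carefully is that in part (1) similar orientability genuinely forbids any $(m-1)$-face of $\phi$ from lying in three or more simplexes of $\phi$ (three similarly oriented cofaces are impossible), which is exactly what promotes $\norm{\partial f}_2^2$ from a bound to an equality with the Hamming norm $\norm{\partial\phi}$.
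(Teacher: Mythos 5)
Your proposal is correct and follows essentially the same route as the paper: part (1) is the same test-chain/relaxation argument (the signed indicator of $\phi$ has Rayleigh quotient exactly $h_m$ because similarly oriented cofaces cancel over a shared face), and part (2) is the same sweep argument, with your layer-cake integral over thresholds $\phi_t$ being the continuous restatement of the paper's sorted sums $\sum_i (f(\sigma_{i+1})^2 - f(\sigma_i)^2)\lvert C_i\rvert$, your factorization $\lvert a^2-b^2\rvert = \lvert a+b\rvert\,\lvert a-b\rvert$ matching the paper's $(f(\sigma)\pm f(\tau))^2(f(\sigma)\mp f(\tau))^2$ device, and your convention of reading $f$ as $0$ on the absent coface playing the role of the paper's border facets. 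The only presentational difference is discrete versus integral bookkeeping; the estimates and the constant $2(m+1)$ arise identically.
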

 
The first statement is the analog of the Buser inequality for graphs.
The second statement is an analog of the Cheeger inequality for graphs, as well as the Cheeger inequality for a manifold with Dirichlet 
boundary conditions. The constraint that every $(m-1)$-simplex has at most two cofaces enforces the
boundary condition. The hypotheses required for both inequalities are always satisfied by orientable pseudomanifolds. 

The hypotheses required by the Theorem cannot be removed, as proved by the following two examples.
\begin{example}[Real Projective Plane] \label{RP2Example}
Given a triangulation $X$ of $\R P^2$ (see Figure \ref{fund_poly}) we know that $H_2(\Z_2) \neq 0$ while $H_2(\R) = 0$, so that $h_2 = 0 \neq \lambda_2$.  This is due to the nonorientability of $\R P^2$.  The chain $\phi \in C_2(\Z_2)$ containing every $m$-simplex has no boundary.  However, the $m$-simplexes cannot all be similarly oriented, so that there is no corresponding boundaryless chain in $C_2(\R)$.  As a result, the hypothesis used in part (1) of the Theorem cannot in general be removed.
\begin{figure}[h]
\centering
\resizebox{0.3\textwidth}{!}{\includegraphics{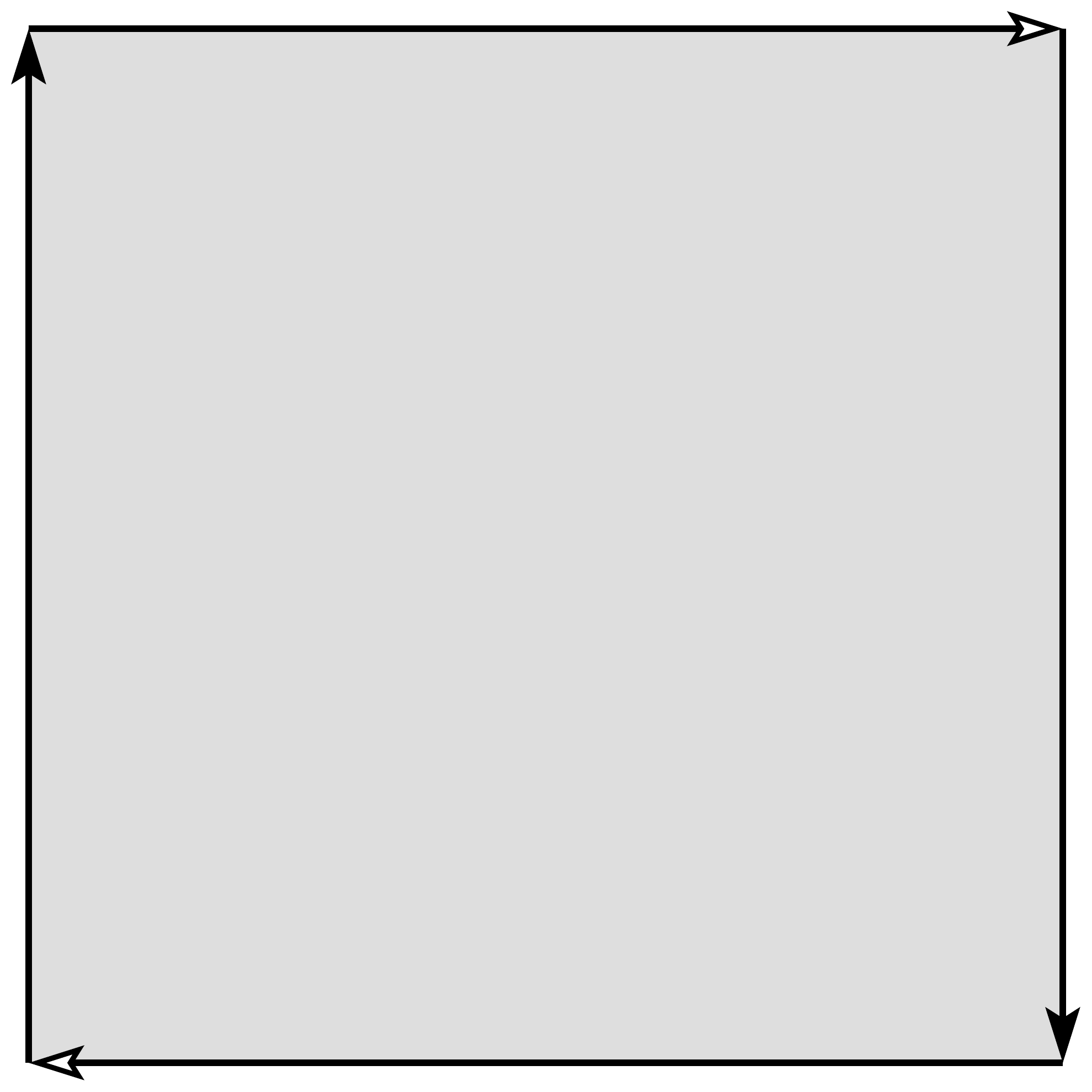}}\\
\caption{The fundamental polygon of $\R P^2$.}
\label{fund_poly}
\end{figure}
\end{example}

\begin{example}
Let $G_k$ be a graph with $2k$ vertices of degree one, half of which connect to one end of an edge and the other half connect to the other end (see figure \ref{graph}).  Clearly, $h^0(G_k) = \frac{1}{k+1}$ while Lemma \ref{diam} implies $h_1 = \frac{2}{3}$.  By the Buser inequality for graphs, $\lambda^0 \leq \frac{2}{k+1}$ and since $\lambda_1 = \lambda^0$, this means that $\lambda_1 \to 0$.  As a result, we conclude that the hypothesis used in part (2) of the Theorem cannot be removed.
\begin{figure}[h]
\centering
\resizebox{0.5\textwidth}{!}{\input{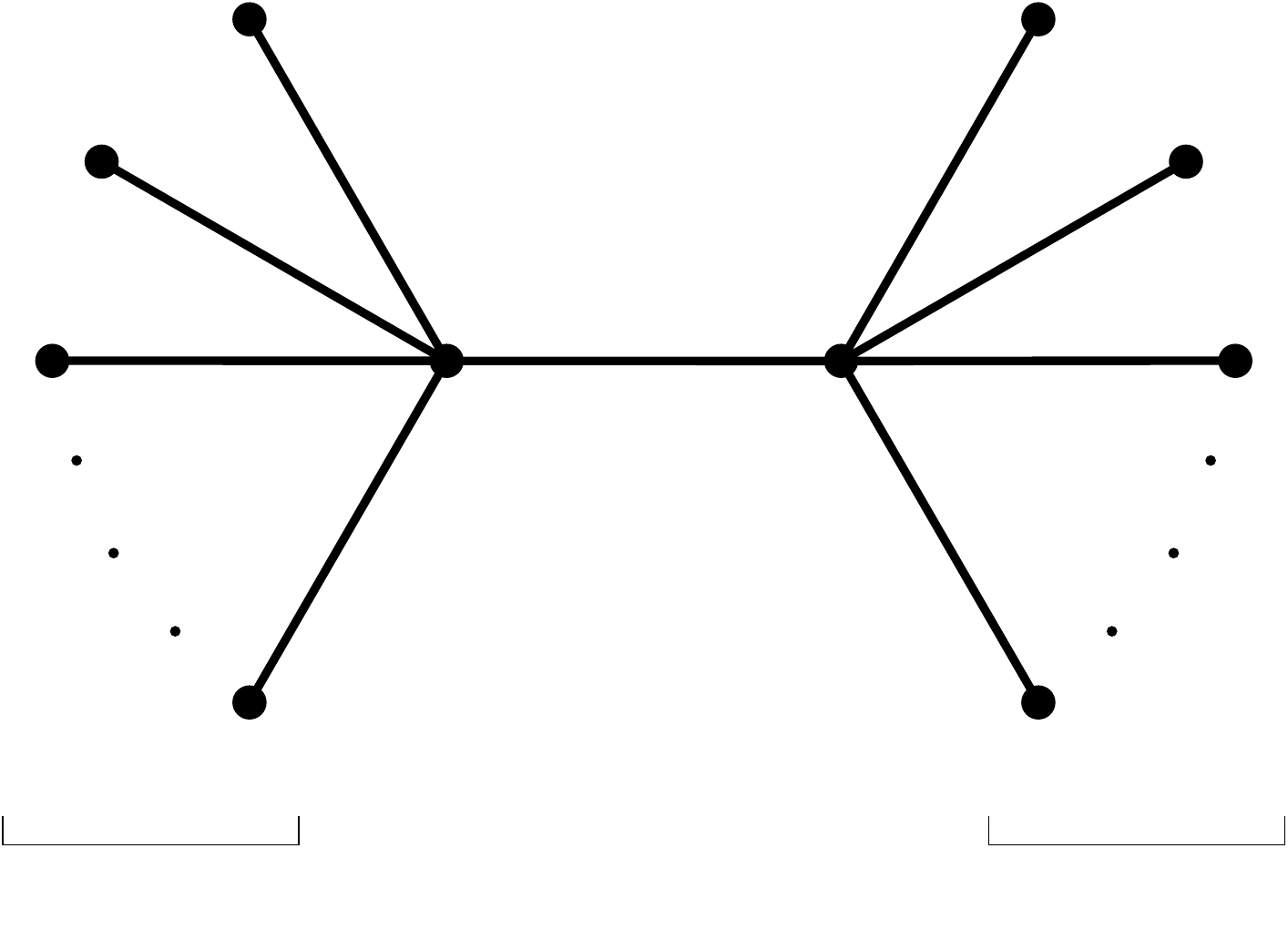_t}}\\
\caption{The family of graphs $G_k$.}
\label{graph}
\end{figure}
\end{example}

\begin{proof}[Proof of Theorem \ref{Buser_Cheeger}] 
Given the hypotheses, $\lambda_m$ is a linear programming relaxation of $h_m$.  Let $g \in C_m(\R)$ be the chain which assigns a 1 to every simplex in $\phi$ (all of them similarly oriented) and a 0 to every other simplex.  Then
\begin{displaymath}
h_m = \frac{\norm{\partial \phi}}{\norm{\phi}} = \frac{\norm{\partial g}_2^2}{\norm{g}_2^2} \geq \min_{\substack{f \in C_m(\R) \\ f \neq 0}} \frac{\norm{\partial f}_2^2}{\norm{f}_2^2} = \lambda_m. 
\end{displaymath}
\end{proof}
\begin{proof}[Proof of Theorem \ref{Buser_Cheeger}]
Let $f$ be an eigenvector of $\lambda_m$ and for any oriented $m$-simplex $\sigma$ let $f(\sigma)$ denote the coefficient assigned to $\sigma$ by $f$.  Orient the $m$-simplexes of $X$ so that all the values of $f$ are non-negative and let $S_m^\text{or}(X)$ be the set of oriented $m$-simplices of $X$.  We do not assume the $m$-simplexes are similarly oriented.  Number the $m$-simplexes from $1$ to $N := |S_m^\text{or}(X)|$ in increasing order of $f$:
\[ 0 \leq f(\sigma_1) \leq f(\sigma_2) \leq \cdots \leq f(\sigma_N). \]

To aid us in the proof, we introduce a new simplicial $m$-complex $X'$ which contains $X$ as a subcomplex and which is defined as follows: for every boundary face $s = \set{v_0, \ldots, v_{m-1}}$ in $X$ create a new vertex $v$ and a new $m$-simplex $\sigma = \set{v_0, \ldots, v_{m-1},v}$ which includes $v$ and $s$.  These new $m$-simplexes will be called border facets.  Give the border facets any orientation and let $F_m^\text{or}(X')$ be the set of oriented border facets.  We can extend $f$ to be a function on $S_m^\text{or}(X) \cup F_m^\text{or}(X')$ by defining $f(\sigma)=0$ for any $\sigma \in F_m^\text{or}(X')$.  Let $M := |F_m^\text{or}(X')|$ and number the oriented border facets in any order: 
\[ F_m^\text{or}(X') = \set{\sigma_0, \sigma_{-1}, \ldots, \sigma_{1-M}}.\]

\begin{figure}
\centering
\resizebox{0.6\textwidth}{!}{\input{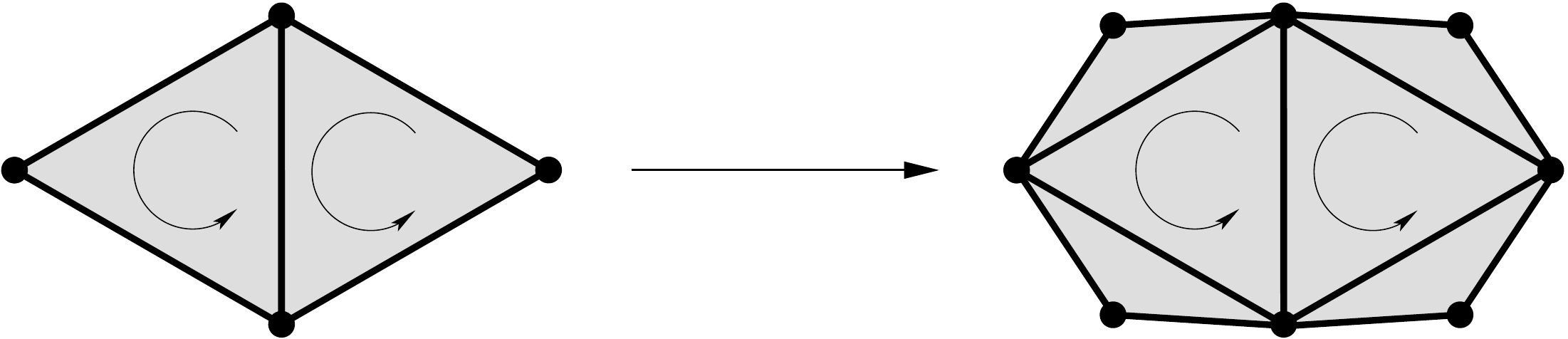_t}}\\
\caption{Making Dirichlet boundary conditions explicit.}
\label{phantom}
\end{figure}

The intuition behind introducing the border facets comes from the analogy with the continuous Cheeger inequality for functions satisfying Dirichlet boundary conditions (see \cite{cheeger1970lower}).  In our case, the Dirichlet boundary condition is implicit in the fact that $f$ is defined on $m$-simplexes (as opposed to vertices).  The border facets represent the boundary of the $m$-dimensional part of $X$, and $f$ is in fact zero on them.  See Figure \ref{phantom} for a depiction.  In this analogy, $h_m$ plays the part of the Cheeger number defined as in \cite{cheeger1970lower} for manifolds with boundary.

When two simplexes $\sigma, \tau$ are lower adjacent we write $\sigma \sim \tau$.  Now define 
\begin{displaymath}
C_i = \left\{ \set{\sigma_j, \sigma_k} : 1-M \leq j \leq i < k \leq N \text{ \ and \ } \sigma_j \sim \sigma_k \right\}
\end{displaymath}
and
\begin{displaymath}
 h[f] = \min_{0 \leq i \leq N-1} \frac{ |C_i|}{N-i}. 
\end{displaymath}
Observe that $h[f] \geq h_m$.

We now finish the theorem.  The following summations are taken over all oriented $m$-simplexes in $S_m^\text{or}(X) \cup F_m^\text{or}(X')$.
\begin{align}
 \lambda_m &= \frac{\sum_{\sigma \sim \tau} (f(\sigma) \pm f(\tau))^2}{\sum_{\sigma} f(\sigma)^2}, \tag{1} \\
           &= \frac{\sum_{\sigma \sim \tau} (f(\sigma) \pm f(\tau))^2}{\sum_{\sigma} f(\sigma)^2} \cdot \frac{\sum_{\sigma \sim \tau} (f(\sigma) \mp f(\tau))^2}{\sum_{\sigma \sim \tau} (f(\sigma) \mp f(\tau))^2}, \nonumber \\ 
           &\geq \frac{\left( \sum_{\sigma \sim \tau} |f(\sigma)^2 - f(\tau)^2| \right)^2}{\left( \sum_{\sigma} f(\sigma)^2 \right) \cdot \left(\sum_{\sigma \sim \tau} (f(\sigma) \mp f(t_2))^2\right)}, \tag{3} \\ 
           &\geq \frac{\left( \sum_{\sigma \sim \tau} |f(\sigma)^2 - f(\tau)^2| \right)^2}{\left( \sum_{\sigma} f(\sigma)^2 \right) \cdot \left(2\sum_{\sigma \sim \tau}f(\sigma)^2 + f(\tau)^2\right)}, \nonumber \\ 
           &= \frac{\left( \sum_{\sigma \sim \tau} |f(\sigma)^2 - f(\tau)^2| \right)^2}{\left( \sum_{\sigma} f(\sigma)^2 \right) \cdot 2(m+1) \cdot \left(\sum_{\sigma} f(\sigma)^2\right)}, \nonumber \\ 
           &= \frac{\left( \sum_{i=0}^{N-1}(f(\sigma_{i+1})^2 - f(\sigma_i)^2)|C_i|\right)^2}{2(m+1) \cdot \left(\sum_{\sigma} f(\sigma)^2\right)^2}, \tag{6} \\ 
           &\geq \frac{\left( \sum_{i=0}^{N-1}(f(\sigma_{i+1})^2 - f(\sigma_i)^2)h[f](N-i)\right)^2}{2(m+1) \cdot \left(\sum_{\sigma} f(\sigma)^2\right)^2}, \nonumber \\
           &= \frac{h[f]^2}{2(m+1)} \cdot \frac{\left(\sum_{\sigma} f(\sigma)^2\right)^2}{\left(\sum_{\sigma} f(\sigma)^2\right)^2}, \nonumber \\
           &\geq \frac{h_m^2}{2(m+1)}. \nonumber
\end{align}
Step (1) follows from the Rayleigh quotient characterization of $\lambda_m$ and step (3) follows from the Cauchy-Schwarz inequality.  We prove the statement for step (6) below.

We want to show
\[ \sum_{\sigma \sim \tau} |f(\sigma)^2 - f(\tau)^2| = \sum_{i=0}^{N-1}(f(\sigma_{i+1})^2 - f(\sigma_i)^2)|C_i|. \]
This can be seen by counting the number of times each $f(\sigma_i)^2$ appears in each sum.  In the left hand sum, each $f(\sigma_i)^2$ appears a number of times equal to 
\[ \Delta_i := \left| \set{ \set{\sigma_j, \sigma_i} : j < i \text{ \ and \ } \sigma_j \sim \sigma_i}| - |\set{ \set{\sigma_i, \sigma_k} : i < k \text{ \ and \ } \sigma_i \sim \sigma_k} \right|. \]
On the other hand, each $f(\sigma_i)^2$ appears $|C_{i-1}| - |C_i|$ times in the right hand sum.  To see that these are the same, note that for each pair $\set{\sigma_j, \sigma_k}$ in $C_{i-1}$, either $k=i$ or else $\set{\sigma_j, \sigma_k}$ is in $C_i$ as well, meaning it is canceled in the difference.  Similarly, for each pair $\set{\sigma_j, \sigma_k}$ in $C_i$, either $j=i$ or else $\set{\sigma_j, \sigma_k}$ is in $C_{i-1}$ as well, again meaning it is canceled.  Thus
\[ |C_{i-1}| - |C_i|  = \Delta_i. \]
This completes the proof.
\end{proof}


 We now state the negative result -- the analogous Cheeger-type inequality for the cochain complex does not hold. 

\begin{proposition}\label{cochain}
  For every $m>1$, there exist families of simplicial $m$-balls $X_k$ and $Y_k$ such that
  \begin{enumerate}
  \item[(1)] for $X_k$,  $\lambda^{m-1}(X_k) \geq \frac{(m-1)^2}{2(m+1)}$ for all $k$ but $h^{m-1}(X_k) \to 0$ as $k \to \infty$.
 \item[(2)] for $Y_k$, $\lambda^{m-1}(Y_k) \leq \frac{1}{m^{k-1}}$ for $k > 1$ but $h^{m-1}(Y_k) \geq \frac{1}{k}$ for all $k$.
  \end{enumerate}
  \end{proposition}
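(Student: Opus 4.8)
The plan is to treat the two families separately, leaning on two structural facts about any simplicial $m$-ball: it is contractible, so $H^{m-1}(\Z_2)=H_m(\Z_2)=0$ (hence Lemma \ref{lem: X_k} gives $h^{m-1}=1/\rad$) and $H_m(\R)=H^{m-1}(\R)=0$ (hence $\lambda^{m-1}=\lambda_m$); and it is a manifold-with-boundary, so every $(m-1)$-simplex has at most two cofaces, making Theorem \ref{Buser_Cheeger} and Lemma \ref{lem: depth-attaining cochains} available. Throughout I use that $\im\partial_{m+1}=0$ for a ball, so that $h_m=\min_{\emptyset\neq A\subseteq S_m}\abs{\partial A}/\abs{A}$ and $\lambda_m=\min_{f\neq 0}\norm{\partial f}_2^2/\norm{f}_2^2$, and I identify $S_m$ with the vertex set of the dual (facet-adjacency) graph.

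For $X_k$ I would build a ball whose dual graph is a \emph{tree} but which has facets of arbitrarily large depth. Start with one $m$-simplex $\sigma_0$ and repeatedly glue on a new $m$-simplex along a single exposed facet, always with a fresh vertex and never attaching two simplices to the same $(m-1)$-face; the constructibility criterion in Section 2 shows each step keeps $X_k$ a non-branching constructible $m$-ball, and each step adds a leaf to the dual tree. Growing $k$ breadth-first layers around $\sigma_0$ (filling every exposed facet of every facet within dual-distance $k-1$) forces $\depth(\sigma_0)\geq k$, so $\rad(X_k)\to\infty$ and $h^{m-1}(X_k)=1/\rad(X_k)\to 0$ by Lemma \ref{lem: X_k}. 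For the eigenvalue, double counting the $(m+1)\abs{A}$ ridge-incidences of a set $A$ gives $\abs{\partial A}=(m+1)\abs{A}-2i_{AA}$, where $i_{AA}$ counts interior ridges with both cofaces in $A$. Since the dual is a tree, $i_{AA}\leq\abs{A}-1$, so $h_m>m-1$, and Theorem \ref{Buser_Cheeger}(2) yields $\lambda^{m-1}(X_k)=\lambda_m(X_k)\geq (m-1)^2/\bigl(2(m+1)\bigr)$.

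For $Y_k$ the construction is reversed: I want a ball with tiny spectral gap but small radius. Take a simplicial $m$-sphere $Z_k$ and put $Y_k=Z_k\setminus\sigma_*$ for a single facet $\sigma_*$; this is an $m$-ball whose only boundary faces are the $m+1$ faces of $\sigma_*$. Orienting the facets consistently (a sphere is orientable) and testing the Rayleigh quotient against the all-ones chain $g=\sum_{\sigma\neq\sigma_*}\sigma$ gives $\partial g=-\partial\sigma_*$, whence $\lambda^{m-1}(Y_k)=\lambda_m(Y_k)\leq (m+1)/(N-1)$ with $N:=\abs{S_m(Z_k)}$. On the other hand any facet $\sigma$ is cancelled by the cochain tracing a dual path from $\sigma$ to $\sigma_*$ (ending on a face of $\sigma_*$), so $\depth(\sigma)\leq d(\sigma,\sigma_*)$, the dual-graph distance; hence $\rad(Y_k)$ is at most the dual-eccentricity of $\sigma_*$, and $h^{m-1}(Y_k)=1/\rad(Y_k)$ by Lemma \ref{lem: X_k}. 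So it suffices to exhibit, for each $k$, a simplicial $m$-sphere in which one facet has eccentricity at most $k$ yet the whole sphere has at least $(m+1)m^{k-1}+1$ facets.

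The main obstacle is precisely this last requirement: spheres that are \emph{exponentially large but of linear dual-radius}. I would produce them by balanced stacking — iterated stellar subdivision of facets of $\partial\Sigma^{m+1}$, grown breadth-first from $\sigma_*$ — which makes the dual expand almost like an $(m+1)$-regular tree and thus packs $\Theta(m^{k})$ facets into dual-radius $k$; the $(m+1)$-regular tree bound $1+(m+1)\tfrac{m^{k}-1}{m-1}$ shows there is just enough room to meet the count $(m+1)m^{k-1}+1$ needed for $\lambda^{m-1}(Y_k)\leq 1/m^{k-1}$, while $\rad(Y_k)\leq k$ gives $h^{m-1}(Y_k)\geq 1/k$. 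Checking that such a stacked sphere genuinely closes up, keeps the eccentricity of $\sigma_*$ at most $k$, and meets the facet count is the delicate step, and it is where the exact constants $1/m^{k-1}$ and $1/k$ are pinned down; by contrast, once the tree-shaped ball of part (1) and the stacked sphere of part (2) are in hand, the Rayleigh and Hamming estimates above are routine.
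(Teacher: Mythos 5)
Your part (1) is essentially the paper's construction: the family you describe (glue a fresh $m$-simplex, with a new vertex, onto every exposed $(m-1)$-face, and iterate $k$ times) is exactly the paper's $X_k$, and your verification that it is a constructible, non-branching ball and that $\depth(\sigma_0)\ge k$ matches the paper's use of Lemma \ref{lem: depth-attaining cochains} and Lemma \ref{lem: X_k}. Your lower bound $h_m>m-1$ is obtained differently -- via the identity $\abs{\partial A}=(m+1)\abs{A}-2i_{AA}$ together with $i_{AA}\le\abs{A}-1$ because the dual graph is a tree -- whereas the paper runs an induction decomposing a minimizing chain into an old part and a layer of depth-$1$ facets. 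Your argument is correct and arguably cleaner, though it depends on the dual graph being a forest, while the paper's induction would survive mild modifications of the construction.

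For part (2) there is a genuine gap, and it sits exactly where you flag it. Your reduction (sphere minus a facet, $\lambda_m\le(m+1)/(N-1)$ from the all-ones chain, $\depth(\sigma)\le d(\sigma,\sigma_*)$ from dual paths, hence $h^{m-1}\ge 1/\operatorname{ecc}(\sigma_*)$) is sound, but ``iterated stellar subdivision of $\partial\Sigma^{m+1}$ grown breadth-first'' is not yet a construction. If at each round you subdivide \emph{every} facet other than $\sigma_*$, the eccentricity of $\sigma_*$ does not grow by $1$ per round: subdividing a facet $\rho$ at distance $r$ attaches its $m$ non-inheriting pieces to the piece of $\rho$ inheriting the ridge toward its parent, but that parent has \emph{also} been subdivided, so the relevant piece of the parent has itself moved to distance $r+1$; the eccentricity then grows by up to $2$ per round, giving only $h^{m-1}\ge 1/(2k)$ rather than $1/k$. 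To get the stated constants you must freeze most of the complex and subdivide only a controlled frontier -- e.g.\ subdivide at round $r$ only the facets at distance exactly $r$ from $\sigma_*$, or do what the paper does: subdivide $\Sigma^m$ once to create a central vertex $v$ and thereafter subdivide only the facets containing $v$, so that each round multiplies the inner facets by $m$ (yielding $\abs{S_m(Y_k)}=(m+1)m^{k-1}$ exactly, with the $m+1$ boundary ridges of $\Sigma^m$ untouched) while every facet's depth increases by at most $1$. With that modification your argument closes; without it the bound $h^{m-1}(Y_k)\ge 1/k$ is not established.
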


As mentioned in the introduction, it has already been shown in \cite{gundert2012laplacians} that there exist infinite families of simplicial complexes for which $h^{m-1} = 0$ but $\lambda^{m-1}$ is bounded away from 0.  Such a construction relies on the presence of torsion in the integral homology groups.  Indeed, any simplicial complex with torsion can be used to show that the inequality $(h^k)^p \geq C\lambda^k$ need not hold in general for any $p$,$C>0$, and $k>0$.  A good example is $\mathbb{RP}^2$ which has $H^1(\Z_2) \neq 0$ and $H^2(\Z_2) \neq 0$ but $H^1(\R) = 0$ and $H^2(\R) = 0$.  By contrast, the example presented here is a family of orientable simplicial complexes, proving that the failure of the Cheeger inequality to hold is not simply the result of torsion.  

The fact that both families $X_k$ and $Y_k$ are simplicial $m$-balls
 helps show the degree
to which the Cheeger inequality fails to hold even for `nice'
simplicial complexes.  


The proof of Proposition \ref{cochain} puts together much of what appears earlier in this paper.  To show that $X_k$ is a simplicial $m$-ball we will need to prove that it is constructible and non-branching.  The $Y_k$ will be defined by subdividing $\Sigma^m$, implying that it too is a simplicial $m$-ball.  To compute the values of $h^{m-1}$ for $X_k$ and $Y_k$ we make use of Lemmas \ref{lem: depth-attaining cochains} and \ref{lem: X_k}.  Computing $h_m$ will involve simple counting.  By Theorem \ref{Buser_Cheeger} and the fact that $\lambda_m = \lambda^{m-1}$, we can use our estimate of $h_m$ to estimate $\lambda^{m-1}$, finishing the proof.  

Now to begin the proof.  We define the family $X_k$ recursively.  To begin with, we let $X_1$ be $\Sigma^m$, the simplicial complex induced by a single $m$-simplex.  Note that $h_m(X_1) = m+1$ and $h^{m-1}(X_1) = 1$.  Then, given $X_k$, we define $X_{k+1}$ by gluing $m$-simplexes on to $X_k$ as follows: for each boundary face $s = \set{v_0,\ldots,v_{m-1}}$ in $X_k$ we create a new vertex $v$ and a new $m$-simplex $\sigma = \set{v_0,\ldots,v_{m-1}, v}$ which includes $v$ and $s$.  A picture of the first few iterations of $X_k$ for the case $m=2$ can be seen in Figure \ref{X_k}.

\begin{figure}
\begin{tikzpicture}
 \node[anchor=south west,inner sep=0] (image) at (0,0) {\includegraphics[width=0.9\textwidth]{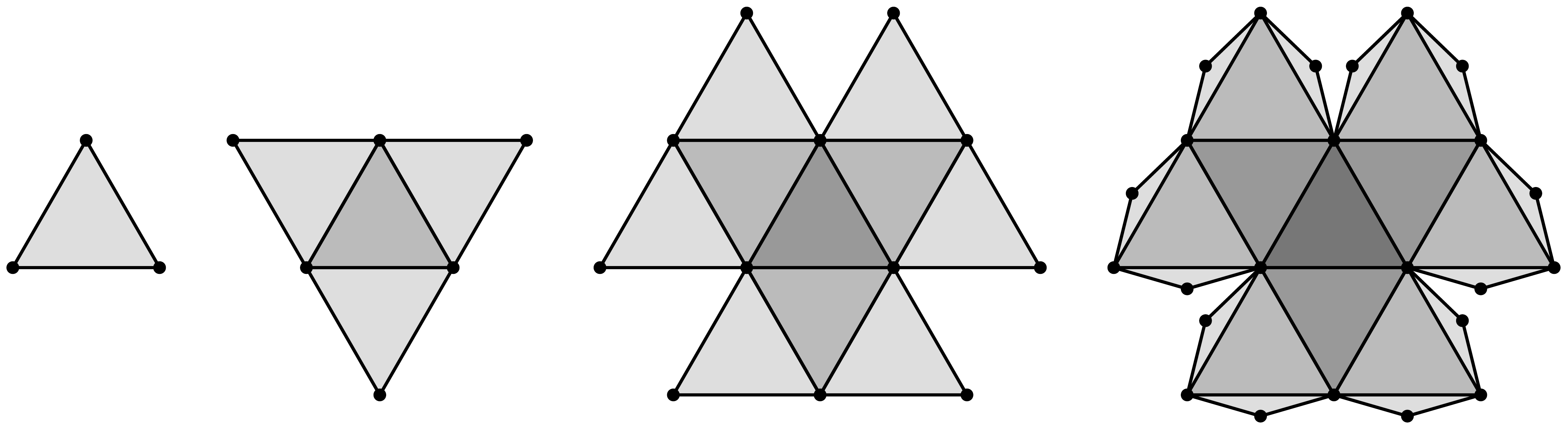}};
 \begin{scope}[x={(image.south east)},y={(image.north west)}]
 \node [anchor=north] at (0.06,0) {$X_1$}; 
 \node [anchor=north] at (0.245,0) {$X_2$}; 
 \node [anchor=north] at (0.525,0) {$X_3$}; 
 \node [anchor=north] at (0.855,0) {$X_4$}; 
 \end{scope}
\end{tikzpicture}
\caption{The first few iterations of $X_k$ in dimension 2.  The 2-simplexes have been shaded according to their depth.}
\label{X_k}
\end{figure}

Clearly, $X_1$ is a simplicial $m$-ball.  The following two lemmas prove that indeed every $X_k$ is a simplicial $m$-ball.
\begin{lemma}
$X_k$ is constructible for all $k$.
\end{lemma}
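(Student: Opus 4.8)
The plan is to argue by induction on $k$, playing the recursive construction of $X_k$ directly against the recursive definition of constructibility. The base case is immediate: $X_1 = \Sigma^m$ is constructible by clause (1). For the inductive step I assume $X_k$ is constructible and deduce the same for $X_{k+1}$.

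The key observation is that $X_{k+1}$ is obtained from $X_k$ by attaching, for each boundary face $s$, one new $m$-simplex along $s$, and that these attachments are independent of one another because each uses a fresh vertex. So rather than attaching them all at once, I would attach them one at a time and run a secondary induction. Concretely, let $s_1, \ldots, s_r$ enumerate the boundary faces of $X_k$, let $\sigma_1, \ldots, \sigma_r$ be the corresponding new $m$-simplexes (so $\sigma_j = s_j \cup \set{v_j}$ with $v_j$ a new vertex), and set $Y_0 = X_k$ and $Y_j = Y_{j-1} \cup \langle \sigma_j \rangle$, where $\langle \sigma_j \rangle$ is the subcomplex induced by $\sigma_j$ (a copy of $\Sigma^m$). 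Then $Y_r = X_{k+1}$, each $Y_j$ is genuinely a simplicial $m$-complex, and it suffices to show every $Y_j$ is constructible.

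The engine of the argument is clause (2) applied at each attachment. Here $Y_{j-1}$ is constructible by the secondary inductive hypothesis (with $Y_0 = X_k$ constructible by the primary hypothesis), and $\langle \sigma_j \rangle \cong \Sigma^m$ is constructible by clause (1). What remains is to verify that the intersection $Y_{j-1} \cap \langle \sigma_j \rangle$ is exactly the $(m-1)$-complex $\Sigma^{m-1}$ induced by $s_j$, which is constructible. This reduces to the observation that $v_j$ lies in no simplex of $Y_{j-1}$: every simplex of $\langle \sigma_j \rangle$ either contains $v_j$, and so cannot lie in $Y_{j-1}$, or is a subset of $s_j$, and so does lie in $Y_{j-1}$ by closure under inclusion, since $s_j \in X_k \subseteq Y_{j-1}$. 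Hence the intersection is precisely the face poset of $s_j$, and clause (2) gives that $Y_j$ is constructible, completing the secondary induction and therefore the primary one.

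I expect the only delicate point to be the bookkeeping in this intersection computation: one must confirm that the previously attached simplexes $\sigma_1, \ldots, \sigma_{j-1}$ contribute nothing beyond $s_j$ to the intersection. This is guaranteed because distinct boundary faces receive distinct new vertices, so the attached $\sigma_i$ are pairwise disjoint away from $X_k$, and in particular $s_j$ is a face of no $\sigma_i$ with $i < j$. No spectral or Cheeger-number input is needed here; the argument is purely combinatorial and simply follows the shape of the definition of constructibility.
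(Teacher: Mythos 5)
Your proof is correct and follows essentially the same route as the paper: reduce to attaching one new $m$-simplex at a time along a boundary face, and apply clause (2) of the definition of constructibility with the intersection being the constructible $(m-1)$-complex $\Sigma^{m-1}$ induced by that face. You simply make the secondary induction and the intersection bookkeeping more explicit than the paper does.
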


\begin{proof}
The proof is by induction.  We know $X_1$ is constructible.  Assuming that $X_k$ is constructible, we must prove that $X_{k+1}$ is constructible.  This reduces to proving that gluing a single $m$-simplex to $X_k$ along a boundary face preserves constructibility.  Let $X'_k$ be the result of taking a boundary face $s = \set{v_0,\ldots,v_{m-1}}$ in $X_k$ and adding a new vertex $v$ and a new $m$-simplex $\sigma = \set{v_0,\ldots,v_{m-1}, v}$ which includes $v$ and $s$.  Then $X'_k$ can be decomposed as the union of $X_k$ and the simplicial subcomplex $T = \Sigma^m$ consisting of $\sigma$ and its subsets, both of which are constructible $m$-complexes.  Furthermore, the intersection of $X_k$ and $T$ is $\Sigma^{m-1}$, which is constructible.  Therefore, $X'_k$ is constructible by definition.
\end{proof}

\begin{lemma}
$X_k$ is non-branching for all $k$.
\end{lemma}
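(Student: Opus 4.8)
The plan is to prove the statement by induction on $k$, tracking how the $(m-1)$-simplexes and their cofaces evolve under the gluing operation. For the base case, $X_1 = \Sigma^m$ is a single $m$-simplex, so every $(m-1)$-simplex is a face of exactly one $m$-simplex and the claim holds (indeed each such face is a boundary face). For the inductive step I would assume that every $s \in S_{m-1}(X_k)$ has at most two cofaces and establish the same for $X_{k+1}$.

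The key observation is to partition the $(m-1)$-simplexes of $X_{k+1}$ into those already present in $X_k$ (``old'') and those created by the gluing (``new''), and to note that each new $m$-simplex $\sigma = \{v_0,\ldots,v_{m-1},v\}$ is built from a boundary face $s = \{v_0,\ldots,v_{m-1}\}$ of $X_k$ together with a freshly created vertex $v$ used nowhere else. An old $(m-1)$-simplex $s'$ can be a face of $\sigma$ only if $s' \subseteq \{v_0,\ldots,v_{m-1}\}$, which forces $s' = s$ since $s'$ contains no new vertices and is itself $(m-1)$-dimensional. Hence the only old faces that acquire a new coface are the boundary faces of $X_k$: such a face had exactly one coface in $X_k$ and gains precisely one more, namely $\sigma$, so it ends up with two, while every other old face keeps its (at most two) cofaces unchanged.

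For the new $(m-1)$-simplexes—those of the form $\{v_0,\ldots,\widehat{v_i},\ldots,v_{m-1},v\}$ containing the new vertex $v$—I would argue that $v$ belongs to no $m$-simplex other than $\sigma$. This is because distinct boundary faces are assigned distinct fresh vertices, so each new vertex lies in exactly one new $m$-simplex and in none of the old ones. Consequently every new $(m-1)$-simplex has exactly one coface and is in fact a boundary face of $X_{k+1}$. Combining the two cases, every $(m-1)$-simplex of $X_{k+1}$ has at most two cofaces, completing the induction.

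The only real obstacle is verifying that the simultaneous gluing across all boundary faces introduces no unexpected coincidences. Since each boundary face receives its own new vertex, two distinct new $m$-simplexes $\sigma = s \cup \{v\}$ and $\sigma' = s' \cup \{v'\}$ with $s \neq s'$ intersect in $\sigma \cap \sigma' \subseteq s \cap s'$, a set of at most $m-1$ vertices; thus they share no common $(m-1)$-face, confirming that each new face indeed has a single coface. This bookkeeping is routine, but it is precisely where the freshness and pairwise distinctness of the glued vertices is used.
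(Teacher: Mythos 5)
Your proof is correct and follows essentially the same inductive argument as the paper: the only old $(m-1)$-simplexes that gain a coface are the boundary faces of $X_k$ (going from one coface to two), and the new $(m-1)$-simplexes, each containing a fresh vertex, have exactly one coface. You simply spell out the bookkeeping about fresh vertices that the paper leaves implicit.
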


\begin{proof}
The proof is again by induction.  We know that $X_1$ is non-branching.  Assume this is true for $X_k$ as well.  By construction, $s \in S_{m-1}(X_k)$ has another coface in $S_m(X_{k+1})$ if and only if $s$ has only one coface in $S_m(X_k)$.  The new $(m-1)$-simplexes are the boundary faces of $X_{k+1}$ and thus have exactly one coface.  Thus, the total number of cofaces of every $(m-1)$-simplex in $X_{k+1}$ is either one or two.  
\end{proof}

As mentioned in the introduction, constructible non-branching simplicial $m$-complexes are simplicial $m$-balls.  Thus, every $X_k$ is a simplicial $m$-ball.

To prove part (1) of Proposition \ref{cochain}, we need to keep track of how the Cheeger numbers $h^{m-1}(X_k)$ and $h_m(X_k)$ change with $k$.  This is accomplished in the following two lemmas.

\begin{lemma}
$h^{m-1}(X_k) = \frac{1}{k}$ for all $k$.
\end{lemma}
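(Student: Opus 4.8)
The plan is to reduce the computation to finding the radius of $X_k$ and then to prove $\rad(X_k) = k$ by induction on $k$, using the combinatorial description of depth supplied by Lemma \ref{lem: depth-attaining cochains}.

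First I would record that each $X_k$ is a simplicial $m$-ball, as established by the two preceding lemmas, so its geometric realization is homeomorphic to $B^m$ and hence contractible. Consequently all reduced (co)homology vanishes; since $m > 1$ gives $m-1 \geq 1$, this yields in particular $H_m(X_k;\Z_2) = 0$ and $H^{m-1}(X_k;\Z_2) = 0$. These are exactly the hypotheses of Lemma \ref{lem: X_k}, which gives
\[ h^{m-1}(X_k) = \frac{1}{\rad(X_k)}. \]
Thus everything reduces to showing $\rad(X_k) = k$.

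Next I would reinterpret depth combinatorially. Because $X_k$ is non-branching, Lemma \ref{lem: depth-attaining cochains} shows that $\depth(\sigma)$ equals the length $d$ of a shortest chain of distinct $m$-simplexes $\sigma = \sigma_1, \ldots, \sigma_d$ in which consecutive simplexes share an $(m-1)$-face and $\sigma_d$ possesses a boundary face; equivalently, $\depth(\sigma)$ is one more than the number of steps needed to walk from $\sigma$, across shared $(m-1)$-faces, to a simplex having a boundary face, and $\rad(X_k)$ is the maximum of this over all $m$-simplexes. I would then induct. The base case $X_1 = \Sigma^m$ is immediate, since every $(m-1)$-face of the single top simplex is a boundary face, giving depth $1$ and $\rad(X_1)=1$. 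For the inductive step, assume $\rad(X_k)=k$. In passing to $X_{k+1}$ every boundary face of $X_k$ gains a second coface, so the only boundary faces of $X_{k+1}$ lie on the newly glued simplexes; each such simplex has $m$ boundary faces and therefore depth $1$. For an old simplex $\sigma \in X_k$, extending a depth-attaining path of $X_k$ by one step into the simplex glued along its terminal boundary face shows $\depth_{X_{k+1}}(\sigma) \leq \depth_{X_k}(\sigma)+1$. Conversely, in a shortest $X_{k+1}$-path from $\sigma$ to a boundary face, let $\sigma_j$ be the first newly glued simplex encountered; then $\sigma_1,\ldots,\sigma_{j-1}$ all lie in $X_k$ and the face joining $\sigma_{j-1}$ to $\sigma_j$ was a boundary face of $X_k$, so this truncation is a valid $X_k$ depth-path of length $j-1 \leq d-1$. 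Hence $\depth_{X_{k+1}}(\sigma) = \depth_{X_k}(\sigma)+1$, and taking maxima gives $\rad(X_{k+1}) = \rad(X_k)+1 = k+1$.

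The main obstacle is the lower-bound half of the inductive step, namely ruling out a shortcut that lets an old simplex's depth grow by less than one. The crucial point is that in $X_{k+1}$ every boundary face sits on a freshly glued simplex, so any shortest path must cross from the old complex into a new simplex precisely at a former boundary face of $X_k$; truncating the path there yields a genuine $X_k$-path, forcing the depth to increase by exactly one. Once $\rad(X_k)=k$ is established, the displayed identity gives $h^{m-1}(X_k) = 1/k$, completing the proof.
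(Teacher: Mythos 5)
Your proposal is correct and follows essentially the same route as the paper: reduce to $h^{m-1}(X_k) = 1/\rad(X_k)$ via Lemma \ref{lem: X_k}, then induct on $k$ to show every old $m$-simplex's depth increases by exactly one, using the structure of depth-attaining cochains from Lemma \ref{lem: depth-attaining cochains} for both the upper bound (extend into the newly glued simplex) and the lower bound (truncate at the first new simplex, whose attaching face was a boundary face of $X_k$). The only cosmetic difference is that you phrase the argument in terms of paths of $m$-simplexes rather than the cochains themselves.
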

\begin{proof}
By Lemma \ref{lem: X_k}, $h^{m-1}(X_k) = \frac{1}{\rad(X_k)}$.  For $k=1$, $\rad(X_1) = 1$.  Now suppose that $\rad(X_k) = k$.  We will prove that in passing from $X_k$ to $X_{k+1}$, all $m$-simplexes originally in $X_k$ have their depth increased by exactly 1 (we already know the new $m$-simplexes in $X_{k+1}$ have depth 1).  

If $\tau \in S_m(X_k)$ has depth $d$ and $\phi$ is a depth-attaining cochain for $\tau$ in $X_k$, then $\phi$ is a sum of a sequence $\set{s_i}_{i=1}^d$ of $(m-1)$-simplexes satisfying the conditions in Lemma \ref{lem: depth-attaining cochains}.  All of those conditions are preserved in going from $X_k$ to $X_{k+1}$, except that $s_d$ is no longer a boundary face.  Instead, if $s_d = \set{v_0, \ldots, v_{m-1}}$ then a new vertex $v$ and a new $m$-simplex $\sigma = \set{v_0, \ldots, v_{m-1}, v}$ are created which prevent $s_d$ from being a boundary face and add $\sigma$ to the coboundary of $\phi$.  However, if we add any of the other faces of $\sigma$ to $\phi$ (which are all boundary faces), we obtain a new cochain $\phi'$ with $\delta \phi' = \tau$ and $\norm{\phi'} = d+1$.  Thus, the depth of $\tau$ in $X_{k+1}$ is at most $d+1$.

Conversely, if $\tau$ has depth $d'$ in $X_{k+1}$ and $\psi = \sum_{i=1}^{d'} t_i$ is a depth-attaining cochain for $\tau$ with $\set{t_i}_{i=1}^{d'}$ satisfying the conditions in Lemma \ref{lem: depth-attaining cochains}, then $\psi' = \sum_{i=1}^{d'-1} t_i$ is a cochain in $X_k$ with $\delta \psi' = \tau$, so that the depth of $\sigma$ is at most $d'-1$.  Thus, if $\tau$ has depth $d$ in $X_k$ then its depth in $X_{k+1}$ must be at least $d+1$.  Combined with the above result we conclude that all $m$-simplexes originally in $X_k$ have their depth increased by exactly 1 in $X_{k+1}$.
\end{proof}
\begin{lemma}
$h_m(X_k) \geq m-1$ for all $k$.
\end{lemma}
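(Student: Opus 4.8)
The plan is to exploit the fact that $X_k$ is genuinely $m$-dimensional, so that $C_{m+1}(\Z_2) = 0$ and $\im \partial_{m+1} = \set{0}$. Consequently the denominator $\min_{\psi \in \im \partial}\norm{\phi+\psi}$ collapses to $\norm{\phi}$, and the constraint $\phi \notin \im \partial$ becomes simply $\phi \neq 0$, so that
\[ h_m(X_k) = \min_{\substack{\phi \in C_m(\Z_2) \\ \phi \neq 0}} \frac{\norm{\partial \phi}}{\norm{\phi}}. \]
Fix a nonzero $\phi$, regarded as a set of $n := \norm{\phi}$ top-dimensional simplexes, and count incidences between these $m$-simplexes and their $(m-1)$-faces. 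Each $m$-simplex has exactly $m+1$ faces, giving $(m+1)n$ incidences in total. Since $X_k$ is non-branching, each $(m-1)$-simplex lies in at most two $m$-simplexes and so is counted $0$, $1$, or $2$ times among the simplexes of $\phi$; write $a$ for the number of faces counted exactly once and $b$ for the number counted twice. Then $a + 2b = (m+1)n$, and over $\Z_2$ a face survives in $\partial\phi$ precisely when it is counted an odd number of times, so $\norm{\partial\phi} = a = (m+1)n - 2b$. It therefore suffices to show $b \le n$, since then $\norm{\partial\phi}/\norm{\phi} \ge ((m+1)n - 2n)/n = m-1$.

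To bound $b$ I would pass to the dual graph $\Gamma_k$ whose vertices are the $m$-simplexes of $X_k$ and whose edges join two $m$-simplexes sharing a common $(m-1)$-face; since two distinct $m$-simplexes share at most one $(m-1)$-face, the quantity $b$ is exactly the number of edges of $\Gamma_k$ spanned by the vertex set $\phi$. The key claim is that $\Gamma_k$ is a tree, which I would prove by induction along the recursive construction. For $X_1 = \Sigma^m$ the graph $\Gamma_1$ is a single vertex. Passing from $X_k$ to $X_{k+1}$, each boundary face $s$ (which has exactly one coface in $X_k$) acquires a new coface $\sigma = s \cup \set{v}$ built on a fresh vertex $v$ belonging to no other simplex. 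Hence the only $(m-1)$-face of $\sigma$ shared with a pre-existing simplex is $s$ itself, while the remaining faces of $\sigma$ contain $v$ and are new boundary faces. Thus in $\Gamma_{k+1}$ every new vertex $\sigma$ is a leaf attached to the unique old simplex containing $s$, so the construction adds equally many new vertices and new edges, each edge joining a new vertex to the existing connected acyclic graph. Therefore $\Gamma_{k+1}$ stays connected and acyclic, i.e. a tree.

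Given that $\Gamma_k$ is a tree, the subgraph it induces on any $n$-element vertex set $\phi$ is a forest and hence has at most $n-1$ edges; thus $b \le n-1 < n$, which completes the argument via the inequality above (in fact yielding $\norm{\partial\phi}/\norm{\phi} \ge (m-1) + 2/n$, and so $h_m(X_k) \ge m-1$). The point requiring the most care is the tree claim, specifically verifying that each glued simplex attaches to the rest of the complex along exactly one face; this is where the observation that each new vertex is used by only one new simplex, combined with the non-branching property established earlier, does the essential work.
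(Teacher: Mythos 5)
Your proof is correct, but it takes a genuinely different route from the paper's. The paper argues by induction on $k$: it decomposes a chain $\phi$ attaining $h_m(X_{k+1})$ as $\psi + \psi'$, where $\psi$ lives in $X_k$ and $\psi'$ is supported on the newly glued depth-$1$ simplexes, writes $\norm{\partial\phi} = \norm{\partial\psi} + (m+1)\norm{\psi'} - 2x$ with $x \leq \norm{\psi'}$ (since $m$ of the $m+1$ faces of each new simplex are boundary faces), and closes with the mediant inequality and the inductive hypothesis. You instead give a direct global double-count: using that $\im\partial_{m+1}=0$ and that $X_k$ is non-branching, you reduce the bound to showing that the number $b$ of $(m-1)$-faces interior to $\phi$ satisfies $b \leq \norm{\phi}$, and you get $b \leq \norm{\phi}-1$ from the structural observation that the dual graph of $X_k$ is a tree (each glued simplex attaches along exactly one face, to the unique coface of a boundary face, since its other faces contain a fresh vertex). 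The induction has not vanished --- it reappears in your tree claim --- but it is isolated in a cleaner place, and your argument yields the slightly sharper conclusion $\norm{\partial\phi}/\norm{\phi} \geq (m-1) + 2/\norm{\phi} > m-1$, consistent with the paper's parenthetical exact value $h_m = \frac{(m+1)(m-1)}{(m+1)-2m^{-k+1}}$. The one hypothesis you use implicitly but correctly is that two distinct $m$-simplexes share at most one common $(m-1)$-face, which holds automatically for abstract simplicial complexes; together with the non-branching lemma already established in the paper, every step checks out.
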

\begin{proof}
We know that $h_m(X_1) = m+1 \geq m-1$.  Now suppose $h_m(X_k) \geq m-1$.  Any chain $\phi \in C_m(\Z_2;X_{k+1})$ attaining $h_m$ can be decomposed into a chain $\psi \in C_m(\Z_2;X_{k})$ plus a chain $\psi'$ which is a sum of depth 1 simplexes in $X_{k+1}$.  Then we can write $\norm{\partial \phi} = \norm{\partial \psi} + \norm{\partial \psi'} - 2x$ where $x$ is the number of $(m-1)$-simplexes shared by $\partial \psi$ and $\partial \psi'$.  Since $m$ of the $m+1$ faces of any $m$-simplex in $\psi'$ are boundary faces, $x \leq \norm{\psi'}$.  Also, it is clear that $\norm{\partial \psi'} = (m+1)\norm{\psi'}$.  Thus,
\begin{align*}
\frac{\norm{\partial \phi}}{\norm{\phi}} &= \frac{\norm{\partial \psi} + (m+1)\norm{\psi'} - 2x}{\norm{\psi} + \norm{\psi'}} \\
                                       &\geq \frac{\norm{\partial \psi} + (m-1)\norm{\psi'}}{\norm{\psi} + \norm{\psi'}} \\
                                       &\geq \min\left\{ \frac{\norm{\partial \psi}}{\norm{\psi}}, m-1 \right\} \\
                                       &\geq m-1
\end{align*}
(In fact, with some effort it can be seen that $h_m = \frac{(m+1)(m-1)}{(m+1) - 2m^{-k+1}}$.)  
\end{proof}
By Theorem \ref{Buser_Cheeger}, $\lambda^{m-1}(X_k) = \lambda_m(X_k) \geq \frac{(m-1)^2}{2(m+1)}$.  This completes the proof of part (1) of Proposition \ref{cochain}.

In order to define the family $Y_k$ we need to make use of the notion of stellar subdivision, which can be traced back to at least \cite{alexander1930combinatorial}.

\begin{definition}[Stellar Subdivision]\label{def: Y_k}
Let $Y$ be a simplicial $m$-complex and let $\sigma = \set{v_0, \ldots, v_m} \in S_m(Y)$.  The stellar subdivision of $Y$ along $\sigma$, denoted by $\sd_\sigma Y$, is the simplicial $m$-complex obtained from $Y$ by creating a new vertex $w$ and replacing $\sigma$ with the $m$-simplexes
\[ \tau_i = \set{v_0, \ldots, v_{i-1}, w, v_{i+1}, \ldots, v_m} \]
where $i = 0, \ldots, m$.  For notational purposes, we denote the $j$-th face of $\tau_i$ by $t_{i,j} := \tau_i \setminus \set{v_j}$ for $i \neq j$, and $t_{i,i} := \tau_i \setminus \set{w}$.  If $\sigma_1, \ldots, \sigma_n \in S_m(Y)$, then we define the stellar subdivision of $Y$ along the $\sigma_i$ to be 
\[ \sd_{\sigma_1, \ldots, \sigma_n} Y := \sd_{\sigma_1}\sd_{\sigma_2}\cdots\sd_{\sigma_n} Y \]
\end{definition}

We now define the $Y_k$ recursively.  Let $\Sigma^m$ be the simplicial complex induced by a single $m$-simplex $\sigma$ and let $Y_1 := \sd_\sigma \Sigma^m$.  Label the $m$-simplexes of $Y_1$ as $\sigma_0, \ldots, \sigma_m$ and call their common vertex (the one created by stellar subdivision) the central vertex $v$.  Now, given a $Y_k$ containing the central vertex $v$, we call all $m$-simplexes containing $v$ the inner $m$-simplexes of $Y_k$ and label them as $\sigma_0, \ldots, \sigma_n$.  All non-inner $m$-simplexes will be referred to as outer $m$-simplexes.  We then define $Y_{k+1} := \sd_{\sigma_0, \ldots, \sigma_n} Y_k$.  Note that $v$ and all outer $m$-simplexes (and the simplexes they contain) are preserved unchanged in going from $Y_k$ to $Y_{k+1}$ while all of the inner $m$-simplexes are subdivided.  Furthermore, it is clear that all the $Y_k$ are subdivisions of $\Sigma^m$ and are thus simplicial $m$-balls.  A picture of the first few iterations of $Y_k$ for $m=2$ can be seen in Figure \ref{Y_k}.

\begin{figure}
\begin{tikzpicture}
 \node[anchor=south west,inner sep=0] (image) at (0,0) {\includegraphics[width=0.9\textwidth]{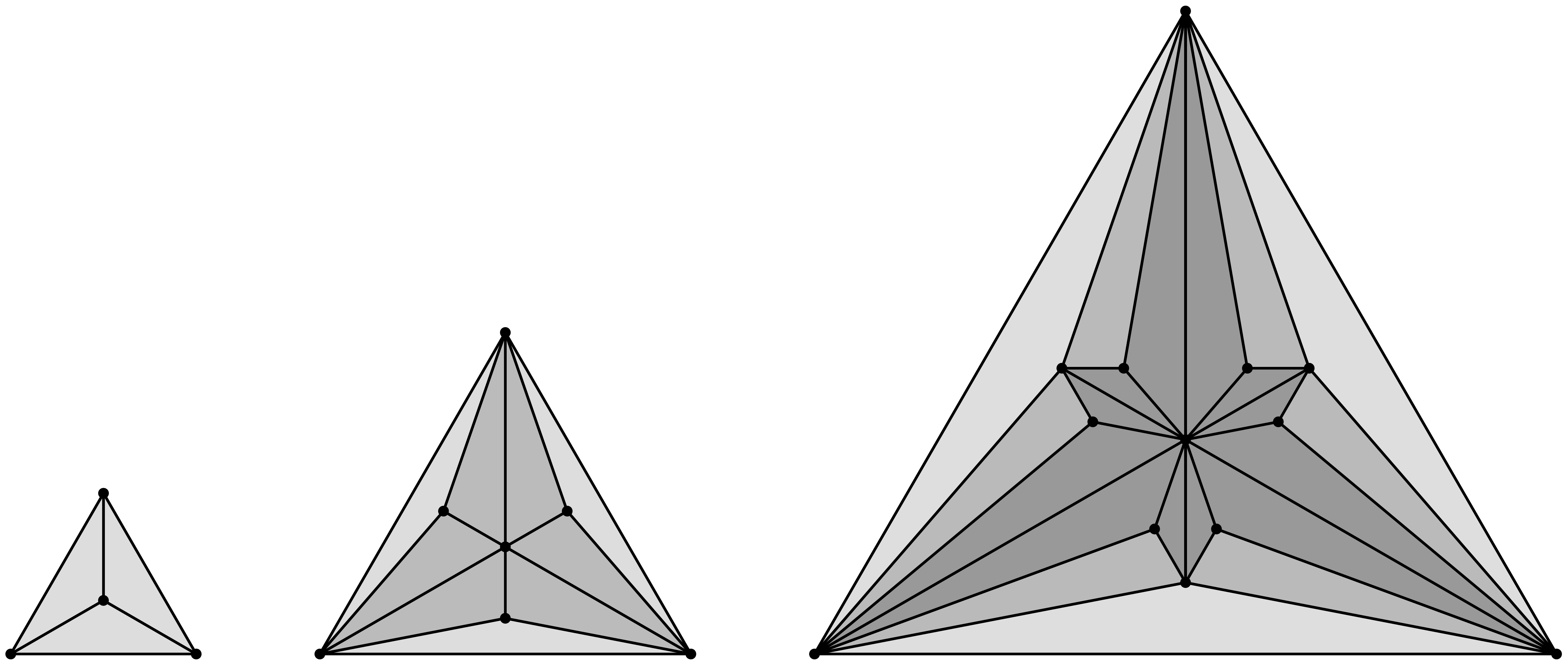}};
 \begin{scope}[x={(image.south east)},y={(image.north west)}]
 \node [anchor=north] at (0.07,0) {$Y_1$}; 
 \node [anchor=north] at (0.325,0) {$Y_2$}; 
 \node [anchor=north] at (0.76,0) {$Y_3$}; 
 \end{scope}
\end{tikzpicture}
\caption{The first few iterations of $Y_k$ in dimension 2.  The $2$-simplexes have been shaded according to their depth.}
\label{Y_k}
\end{figure}

To prove part (2) of Proposition \ref{cochain}, we need to keep track of how the Cheeger numbers $h^{m-1}(Y_k)$ and $h_m(Y_k)$ change with $k$.  This is accomplished in the following two lemmas.

\begin{lemma}
$h^{m-1}(Y_k) \geq \frac{1}{k}$ for all $k$.
\end{lemma}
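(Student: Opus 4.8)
The plan is to reduce the statement to a bound on the radius and then prove that bound by induction on $k$. Since $Y_k$ is a simplicial $m$-ball it is contractible, so $H^{m-1}(\Z_2)=0$ and $H_m(\Z_2)=0$ (we are in the case $m>1$, so $m-1\ge 1$). Lemma \ref{lem: X_k} then applies and gives $h^{m-1}(Y_k)=\frac{1}{\rad(Y_k)}$, so it suffices to prove $\rad(Y_k)\le k$, i.e.\ that every $\sigma\in S_m(Y_k)$ has $\depth(\sigma)\le k$. By Lemma \ref{lem: depth-attaining cochains} a depth-attaining cochain for $\sigma$ is a non-intersecting sequence of $(m-1)$-simplexes tracing a path $\sigma=\sigma_1,\sigma_2,\dots,\sigma_d$ of upper adjacent $m$-simplexes that ends at a boundary face; thus $\depth(\sigma)$ is exactly the length of the shortest such ``dual'' path from $\sigma$ out to the boundary, and it is these path lengths that I will bound.

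First I would record the structural effect of one subdivision step $Y_{k+1}=\sd_{\sigma_0,\dots,\sigma_n}Y_k$. The outer $m$-simplexes of $Y_k$ are carried over unchanged, while each inner simplex $\sigma_i$ (which contains the central vertex $v$) is replaced, using its new vertex $u_i$, by $m$ pieces that still contain $v$ (the new inner simplexes) together with a single piece $\tau_i^{\mathrm{out}}=(\sigma_i\setminus\set{v})\cup\set{u_i}$ that does not contain $v$. A direct comparison of vertex sets gives two facts I would verify: (i) every new inner piece is upper adjacent, across its facet opposite $v$, to the outer piece $\tau_i^{\mathrm{out}}$; and (ii) $\tau_i^{\mathrm{out}}$ still contains the facet $\sigma_i\setminus\set{v}$ of $\sigma_i$ opposite $v$, so across that facet it inherits whatever neighbour (or boundary face) $\sigma_i$ had on its $v$-opposite side.

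The induction then runs as follows. The base case $Y_1=\sd_\sigma\Sigma^m$ is immediate: each $\tau_i$ retains the original facet $\sigma\setminus\set{v_i}$ of $\Sigma^m$, which is a boundary face, so $\depth(\tau_i)=1$ and $\rad(Y_1)=1$. For the inductive step I would carry the slightly stronger invariant that in $Y_k$ every inner simplex admits a shortest path (of length $\le k$) that begins by crossing its facet opposite $v$, and that every outer simplex has depth at most $k-1$. Granting this for $Y_k$, fact (ii) lets me transport the outward path of an outer simplex into $Y_{k+1}$ without lengthening it, so each preserved outer simplex and each new outer piece $\tau_i^{\mathrm{out}}$ has depth at most $k$ in $Y_{k+1}$; then fact (i) prepends one step (into $\tau_i^{\mathrm{out}}$, across the facet opposite $v$), giving every new inner simplex depth at most $k+1$ with a path of the required form. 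This re-establishes the invariant for $Y_{k+1}$ and in particular yields $\rad(Y_{k+1})\le k+1$.

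The step I expect to be the main obstacle is controlling the depth of the \emph{preserved outer simplexes} under subdivision, i.e.\ showing that subdividing the inner region around $v$ does not lengthen an outer simplex's shortest route to the boundary. The clean way to handle this is to argue that a shortest outward path from an outer simplex never needs to enter the inner region at $v$, so that it runs entirely through carried-over outer simplexes and new outer pieces, whose $v$-opposite adjacencies are preserved by (ii) and whose terminal boundary facets lie on $\partial\lvert Y_k\rvert$ and are never touched by the subdivision. Pinning down this ``outward paths re-route without increasing in length'' claim is the crux; once it is established, the remainder is the vertex-set bookkeeping of facts (i) and (ii) described above.
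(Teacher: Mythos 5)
Your overall strategy coincides with the paper's: apply Lemma \ref{lem: X_k} to reduce the statement to $\rad(Y_k)\le k$, use Lemma \ref{lem: depth-attaining cochains} to view depths as lengths of dual paths out to the boundary, and induct on $k$ using exactly your facts (i) and (ii) about one round of stellar subdivision. The problem is the step you yourself single out as the crux and then leave open: that the \emph{preserved} outer simplexes do not have their depths increased when the inner star of $v$ is subdivided. Your proposed route --- showing that a shortest outward path from an outer simplex ``never needs to enter the inner region'' --- is asserted, not proved, and proving it directly would require its own nontrivial argument about the dual-graph structure of the star of $v$ (one must show that a path which enters and later leaves the inner region admits a shortcut, which is not immediate). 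As written, the induction does not close, and your invariant (``inner simplexes have a path of length $\le k$ starting opposite $v$; outer simplexes have depth $\le k-1$'') is not strong enough to supply the missing step.

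The paper closes this gap by carrying a stronger inductive invariant about the \emph{witness cochains} rather than about depths alone: every outer $m$-simplex admits a depth-attaining cochain containing no face of any inner $m$-simplex, and every inner $m$-simplex admits one containing no $(m-1)$-simplex that contains $v$. Stellar subdivision only destroys or re-attaches faces of inner simplexes, so such a cochain survives into $Y_{k+1}$ with its coboundary literally unchanged; the preserved outer simplexes then keep their depth bounds for free. Your facts (i) and (ii) then give the new outer piece depth $\le k$ and the new inner pieces depth $\le k+1$, with witness cochains that again satisfy the invariant (the new faces all involve the new vertex, so avoidance is automatic). If you replace your invariant with this one, the rest of your argument goes through essentially verbatim.
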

\begin{proof}
By Lemma \ref{lem: X_k}, we can prove this by keeping track of the depths of all the $m$-simplexes of $Y_k$.  For $Y_1$, all the $m$-simplexes $\sigma_i$ contain a boundary face (using the notation of Definition \ref{def: Y_k} with $\sigma_i = \tau_i$, the boundary face of $\sigma_i$ is $t_{i,i}$).  Thus, every $\sigma_i$ has depth 1 and by Lemma \ref{lem: X_k}, $h^{m-1}(Y_1) = 1$.  Note that the cochain $\phi$ which is depth-attaining for some $\sigma_i$ does not include any $(m-1)$-simplex which contains $v$.  

Now suppose for induction that every outer $m$-simplex $\sigma$ of $Y_k$ has depth $\leq k$ and a depth-attaining cochain $\phi \in C^{m-1}(\Z_2)$ such that $\phi$ does not contain any face of any inner $m$-simplex.  Then in $Y_{k+1}$, $\phi$ remains unaltered, proving that $\sigma$ still has depth $\leq k$ in $Y_{k+1}$.  

Similarly, suppose that every inner $m$-simplex $\sigma$ of $Y_k$ has depth $\leq k$ via a depth-attaining cochain $\phi$ which does not contain any $(m-1)$-simplex containing $v$.  Then in $Y_{k+1}$, $\sigma$ is removed and replaced by new $m$-simplices.  Using the notation of Definition \ref{def: Y_k}, in $Y_{k+1}$ the coboundary of $\phi$ becomes $\delta \phi = \tau_{m+1}$, so that the depth of $\tau_{m+1}$ is at most $k$.  Furthermore, by adding any face $t_{(m+1),j}$ to $\phi$ ($j \neq m+1$) we obtain a cochain $\phi'$ with $\delta \phi' = \tau_j$, proving that the depth of $\tau_j$ is at most $k+1$.  Since $\phi'$ still does not contain any $(m-1)$-simplex which contains $v$, we are back where we started.  The statement now follows by induction.
\end{proof}

\begin{lemma}
$h_m(Y_k) \leq \frac{1}{m^{k-1}}$ for all $k>1$.
\end{lemma}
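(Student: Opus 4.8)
The plan is to produce an upper bound on $h_m(Y_k)$ by exhibiting a single explicit $m$-chain with small quotient, since any admissible chain bounds the minimum from above. Because each $Y_k$ is a simplicial $m$-ball we have $H_m(\Z_2) = 0$ and there are no $(m+1)$-simplexes, so $\im \partial = 0$; hence $\phi \notin \im \partial$ simply means $\phi \neq 0$ and $\min_{\psi \in \im \partial}\norm{\phi + \psi} = \norm{\phi}$. The natural candidate is the fundamental chain $\phi = \sum_{\sigma \in S_m(Y_k)} \sigma \in C_m(\Z_2)$, i.e.\ the $\Z_2$-chain carrying every $m$-simplex. Then $h_m(Y_k) \leq \norm{\partial\phi}/\norm{\phi}$, and it remains to evaluate numerator and denominator.

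For the numerator, the key observation is that over $\Z_2$ the coefficient of an $(m-1)$-simplex $s$ in $\partial\phi$ equals the number of cofaces of $s$ modulo $2$. Since $Y_k$ is non-branching (being a subdivision of the ball $\Sigma^m$), every interior $(m-1)$-face has exactly two cofaces and so cancels, while every boundary face has exactly one coface and survives; thus $\norm{\partial\phi}$ equals the number of boundary faces. I would then argue that this number stays constant and equal to $m+1$ throughout the recursion: the boundary faces of $Y_k$ are precisely the $m+1$ facets of the original simplex, and since each stellar subdivision step refines only the inner $m$-simplexes (those containing the central vertex $v$), each boundary facet is merely reattached to a newly created outer $m$-simplex without itself being subdivided, and no new boundary faces are created.

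For the denominator I would count $m$-simplexes by a recursion on inner versus outer simplexes. Writing $I_k$ for the number of inner $m$-simplexes of $Y_k$, each stellar subdivision of an inner $m$-simplex produces $m$ inner and $1$ outer $m$-simplex, so $I_{k+1} = m\,I_k$ with $I_1 = m+1$, giving $I_k = (m+1)m^{k-1}$. The total count $N_k$ then satisfies $N_{k+1} = N_k + m\,I_k$ (we replace $I_k$ simplexes by $(m+1)I_k$ of them), which solves to $N_k = (m+1)\frac{m^k - 1}{m-1}$. Combining the two computations,
\[ h_m(Y_k) \leq \frac{m+1}{N_k} = \frac{m-1}{m^k - 1} \leq \frac{1}{m^{k-1}}, \]
the last inequality being equivalent to $m^{k-1} \geq 1$.

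The routine parts are the two counting recursions; the step requiring the most care is verifying that the boundary of $Y_k$ is never refined, so that $\norm{\partial\phi}$ stays exactly $m+1$ rather than growing with the subdivisions. This hinges on the facts that stellar subdivision is applied only to inner $m$-simplexes and that the single outer simplex produced at each step has the old boundary facet as a face, keeping its unique coface. I expect this to be the main obstacle, and would support it by a direct analysis of the cofaces of an $(m-1)$-simplex before and after one stellar subdivision, distinguishing the faces through the new center (two cofaces) from the old facets (coface count unchanged).
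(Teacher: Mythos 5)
Your proof is correct and follows the same strategy as the paper: test the quotient on the fundamental chain $\phi$ consisting of all $m$-simplexes, observe that $\norm{\partial\phi}$ equals the number of boundary faces (which stays at $m+1$ because only inner simplexes are ever subdivided), and count $|S_m(Y_k)|$ by recursion. The one substantive difference is the count itself: your recursion $N_{k+1} = N_k + m I_k$ correctly accounts for the one \emph{outer} $m$-simplex created by each stellar subdivision and gives $|S_m(Y_k)| = (m+1)\frac{m^k-1}{m-1}$, whereas the paper's increment $(m+1)m^k - (m+1)m^{k-1}$ tracks only the change in the number of inner simplexes and arrives at $(m+1)m^{k-1}$, which is actually the inner count rather than the total (for $m=2$, $Y_2$ has $9$ triangles, not $6$). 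Your resulting bound $h_m(Y_k) \leq \frac{m-1}{m^k-1}$ is slightly sharper than, and implies, the stated $\frac{1}{m^{k-1}}$, so the lemma as written is unaffected.
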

\begin{proof}
To prove this, we merely count the number of $m$-simplexes in $Y_k$.  Note that in going from $Y_k$ to $Y_{k+1}$ we replace $(m+1)m^{k-1}$ inner $m$-simplexes with $(m+1)m^k$ inner $m$-simplexes.  Thus, $Y_{k+1}$ has 
\[(m+1)m^k - (m+1)m^{k-1} = (m+1)(m-1)m^{k-1}\]
more $m$-simplexes than $Y_k$.  Since $|S_m(Y_1)| = m+1$, this means that $|S_m(Y_k)|$ is equal to
\begin{multline*}
(m+1) + (m+1)(m-1) + (m+1)(m-1)m + \ldots \\+ (m+1)(m-1)m^{k-2} = (m+1)m^{k-1}.
\end{multline*}
Since $Y_k$ has $m+1$ boundary faces, the chain $\phi$ containing all $m$-simplexes of $Y_k$ gives the upper bound on $h_m(Y_k)$:
\[ h_m(Y_k) \leq \frac{\norm{\partial \phi}}{\norm{\phi}} = \frac{m+1}{(m+1)m^{k-1}} = \frac{1}{m^{k-1}}. \]
\end{proof}
By Theorem \ref{Buser_Cheeger}, $\lambda^{m-1}(Y_k) = \lambda_m(Y_k) \leq \frac{1}{m^{k-1}}$.  This completes the proof of Proposition \ref{cochain}.

\section{Discussion and Open Problems}

The Cheeger inequality has been relevant to a variety of algorithmic 
and analysis problems in computer science and mathematics 
including spectral clustering 
\cite{kannan2004clusterings,Shi_2001_3807}, manifold learning
\cite{belkin2008towards}, and the analysis of random walks
\cite{lawlersocal88}. 

There has been interest in extending ideas from graphs to abstract
simplicial complexes including spanning trees on simplicial complexes
\cite{DuKlMa}, properties of expanders on simplicial complexes 
\cite{meshulam2009homological,dotterrer2010coboundary,gundert2012laplacians},
and higher-dimensional constructions of conditional independence \cite{Luna09}.
A motivation for our work was to begin to develop intuition for the
mathematical principles behind a higher-dimensional notion of spectral 
clustering. This objective is far from being realized.

A result of the universal coefficient theorem in algebraic topology
is that torsion will be an obstacle in relating higher-dimensional 
Cheeger numbers with eigenvalues. The Cheeger inequality for graphs
holds without any assumptions since zeroth homology is never affected
by torsion.  For higher dimensions either the inequality does not hold 
or we require assumptions that remove torsion. The negative results 
for the Cheeger inequality in \cite{gundert2012laplacians} are for
simplicial complexes with torsion. Torsion is also known to affect
algorithmic complexity. For example, the problem of finding minimal 
weight cycles given a simplicial complex with weights is
NP-hard if there is torsion and is otherwise a linear program 
\cite{DeyHiKris}. In Appendix A we use the real projective plane to illustrate some of the issues 
with torsion and why they do not appear in the graph setting. 

A local Cheeger number and algebraic connectivity for graphs with
Dirichlet like boundary conditions was defined in 
\cite{chung2007random} and a Cheeger inequality was proved.
There is a close relation between Theorem 1 of \cite{chung2007random}
and Theorem \ref{Buser_Cheeger} in our paper. If Theorem 1
is adapted to an unnormalized setting (see
Appendix A) then for non-branching orientable simplicial $m$-complexes 
Theorem \ref{Buser_Cheeger} reduces to Theorem 1. However, Theorem
\ref{Buser_Cheeger} covers the more general cases of non-orientable 
and branching simplicial $m$-complexes.

We close with a few open problems of possible interest.
\begin{enumerate}
\item Intermediate values of $k$ -- Given a simplicial
$m$-complex, what can we say about the relationship between $h^k$
and $\lambda^k$ or $h_k$ and $\lambda_k$ for $1 < k < m-1$?
Torsion again will need to be addressed but are there some
conditions under which some Cheeger-type 
inequalities may hold? 
\item High-order eigenvalues -- In \cite{lee2011multi} the
authors introduce higher-order (as opposed to higher-dimensional)
Cheeger numbers on the graph which correspond to higher-order
eigenvalues of the graph Laplacian and prove a general Cheeger
inequality for them.  A natural question is how our results would
extend to higher-orders.
Indeed, by analogy with the Rayleigh quotient characterization of higher order eigenvalues, it would seem reasonable to define the $k^\text{th}$ dimensional, $j^\text{th}$ order coboundary Cheeger numbers to be
\[ h^{k,j} := \min_{\substack{\phi \in C^k(\Z_2) \\ \phi \notin S_j}} \frac{\norm{\delta \phi}}{\min_{\psi \in S_j} \norm{\phi + \psi}} \]
where 
\[ S_j = \spn(\im \delta \cup \set{\phi_1, \ldots, \phi_{j-1}}) \]
is the subspace of $C^k(\Z_2)$ spanned by $\im \delta$ and cochains $\phi_1, \ldots, \phi_{j-1}$ which attain $h^{k,1}, \ldots, h^{k,j-1}$, respectively.  The higher order boundary Cheeger numbers $h_{k,j}$ could be defined similarly.  One would need to prove that this definition makes sense and then ask whether they satisfy any inequalities with the corresponding eigenvalues.
\item Cheeger inequalities on manifolds --
Ultimately, the study of higher-dimensional Cheeger numbers on simplicial complexes should (morally speaking) be translated back to the manifold setting if possible.  A tentative definition for the $k$-dimensional coboundary  Cheeger number of a manifold $M$ might be
\[ h^k = \inf_S \frac{\Vol_{m-k-1}(\partial S \setminus \partial M)}{\inf_{\partial T = \partial S} \Vol_{m-k}(T)} \]
where $\Vol_k$ denotes $k$-dimensional volume and the infimum is taken over all $k$-codimensional submanifolds $S$ of $M$.
Similarly, the $k$-th boundary Cheeger number of $M$ might be
\[ h_k = \inf_S \frac{\Vol_{k-1}(\partial S)}{\inf_{\partial T = \partial S} \Vol_k(T)} \]
where again $\Vol_k$ denotes $k$-dimensional volume and the infimum is taken over all $k$-dimensional submanifolds $S$ of $M$.
\end{enumerate}

\section*{Acknowledgments}
SM would like to acknowledge Shmuel Weinberger and Matt Kahle for discussions and
insight. SM is pleased to acknowledge support from grants NIH (Systems Biology): 5P50-GM081883, AFOSR:
FA9550-10-1-0436, NSF CCF-1049290, and NSF-DMS-1209155.  JS would like to acknowledge Matt Kahle, Yuan Yao, Anna Gundert, Yuriy Mileyko, and Mikhail Belkin for discussions and insight.  JS is pleased to acknowledge support from graph NSF CCF-1209155 and a Duke Endowment Fellowship.

\appendix
\section{Relation to Graphs with Dirichlet Boundaries and the Real Projective Plane}

In \cite{chung2007random}, Fan Chung defines a normalized local Dirichlet Cheeger number and normalized local Dirichlet eigenvalue and proves an inequality between them.  If one translates Fan Chung's result to the unnormalized case for graphs with vertex degree upper bounded by $m+1$, it closely resembles 
Theorem \ref{Buser_Cheeger}. 

Translating Theorem 1 of \cite{chung2007random} into the unnormalized setting, it reads as follows.  Given a graph $G$ we can prescribe a certain set of vertices to be the boundary vertices of the graph.  Let $S$ be the prescribed boundary vertex set, and let 
\[ h_S := h_S(G) = \min \frac{\norm{\delta \phi}}{\norm{\phi}} \]
where the minimum is taken over all nonzero $\phi \in C^0(\Z_2)$ such that $\phi$ does not include any boundary vertex.  Similarly, let \[ \lambda_S = \min \frac{\norm{\delta f}_2^2}{\norm{f}_2^2} \]
where the minimum is taken over all nonzero $f \in C^0(\R)$ such that $f(s) = 0$ for all $s \in S$.  We can also characterize $\lambda_S$ as the smallest eigenvalue of $L_0^S$, the submatrix of $L_0$ consisting of the rows and columns of $L_0$ not indexed by vertices in $S$.  In this case, $L_0^S$ is a map on $C^0_S(\R)$, the subspace of $C^0(\R)$ spanned by the vertices not in $S$. Then if every vertex has degree upper bounded by $m+1$
\[ h_S \geq \lambda_S \geq \frac{h_S^2}{2(m+1)}.\]

To relate the above inequality to the simplicial complex setting, we note that for every non-branching simplicial $m$-complex $X$, one can construct a graph $G$ (similar to the dual graph defined in \cite{fomin2008cluster}) as follows.  Begin by constructing the simplicial complex $X'$ as in the proof of Theorem \ref{Buser_Cheeger} and let $S$ be the set of border facets of $X'$.  Create a vertex in $G$ for every $m$-simplex in $X'$.  We will use $S$ to denote both the border facets of $X'$ and the set of vertices in $G$ which correspond the border facets.  Connect two vertices with an edge whenever the corresponding $m$-simplexes are lower adjacent in $X'$.  Since $X'$ is non-branching, the vertices of $G$ have degree upper bounded by $m+1$.  Identifying $C^0_S(G; \R)$ with $C_m(X; \R)$, we can ask if $L_m : C_m(X; \R) \to C_m(X; \R)$ and $L_0^S : C^0_S(G; \R) \to C^0_S(G; \R)$ are the same map.  They are the same if and only if $X$ is orientable (this is easy to see from the characterization of the Laplacian in \cite{muhammad2006control}).  In addition, $h_m(X)$ and $h_S$ are equal regardless of orientability.  Thus, for non-branching orientable simplicial $m$-complexes, Theorem \ref{Buser_Cheeger} reduces to the result proved by Fan Chung, and the proofs are identical.  The difference is that Theorem \ref{Buser_Cheeger} covers the more general cases of non-orientable and branching simplicial $m$-complexes, for which parts of the inequality may still hold.

The real projective plane provides a simple example of how orientation plays a role in our analysis of the Cheeger inequality and why it doesn't play a role in \cite{chung2007random}.  In Figure \ref{RP2_Comp_Graph}, the first image shows the fundamental polygon that defines $\R P^2$, the second image shows a triangulation $X$ of $\R P^2$, and the third image is the dual graph $G$ of the triangulation (in the second and third image, edges with similar color are identified).  In this simple example, there is no boundary ($S = \emptyset$).  In the triangulation, if one considers the 2-chain $\phi \in C_2(\Z_2)$ which contains every 2-simplex, then $\partial \phi = 0$ and thus $h_2(X) = 0$.  However, if one considers the 2-chain $f \in C_2(X; \R)$ that assigns a 1 to every 2-simplex with the orientation shown in the figure, the boundary of $f$ is a 1-chain which assigns a 2 to every colored edge with the orientation shown.  In particular, $\partial f \neq 0$ and in fact $\lambda_2 \neq 0$ as a result of the nonorientability of $\R P^2$.  However, the dual graph cannot see this nonorientability, as the 0-chain $\tilde{f} \in C_S^0(G; \R)$ corresponding to $f$ has empty coboundary, meaning $\lambda_S = 0$.  Thus, in this case the map $L_2$ is not the same as the map $L_0^S$, and Theorem 1 of \cite{chung2007random} still holds while part 1 of Theorem \ref{Buser_Cheeger} fails.

\begin{figure}
\centering
\includegraphics[width=0.3\textwidth]{RP2_Fund_Poly.pdf} \quad
\includegraphics[width=0.3\textwidth]{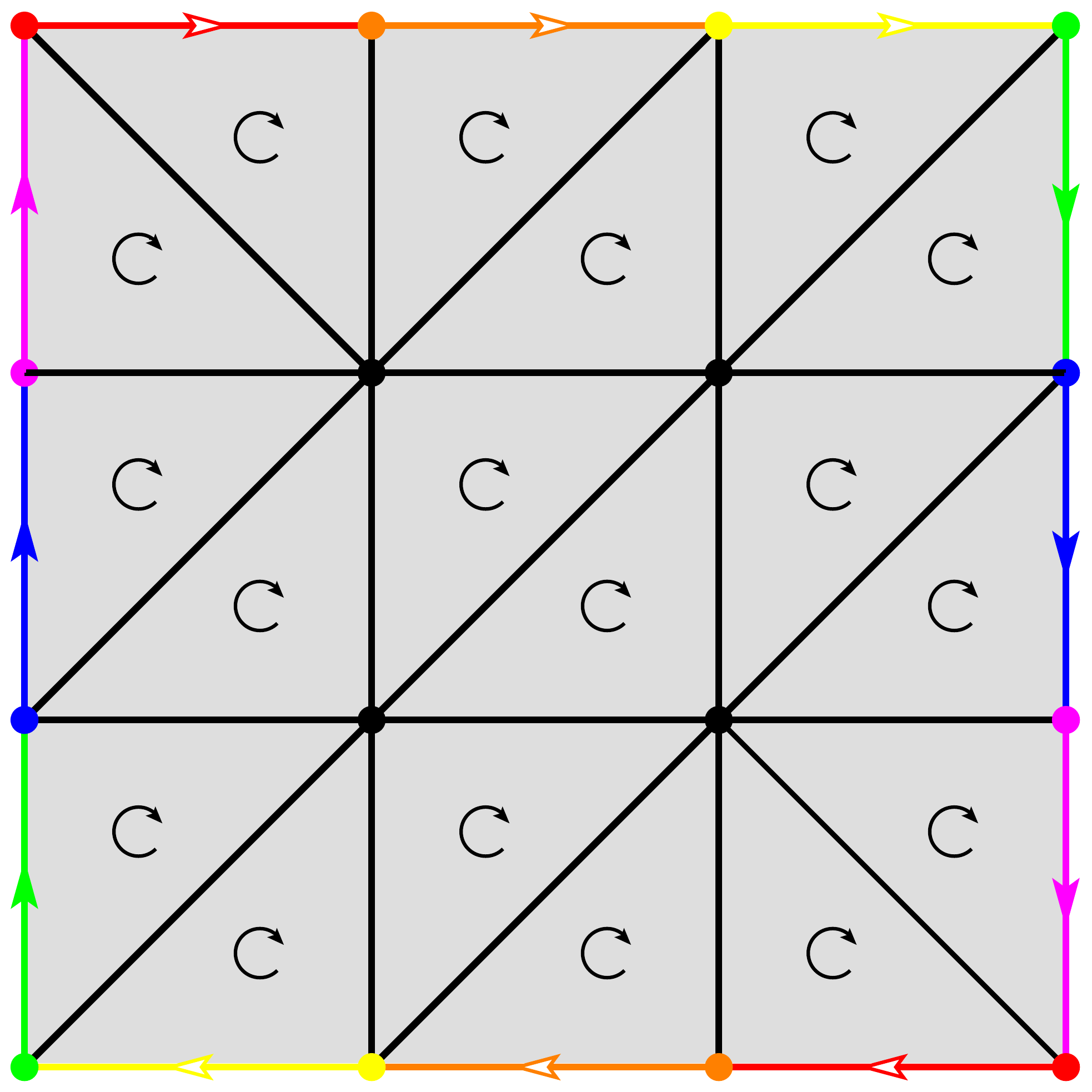} \quad
\includegraphics[width=0.3\textwidth]{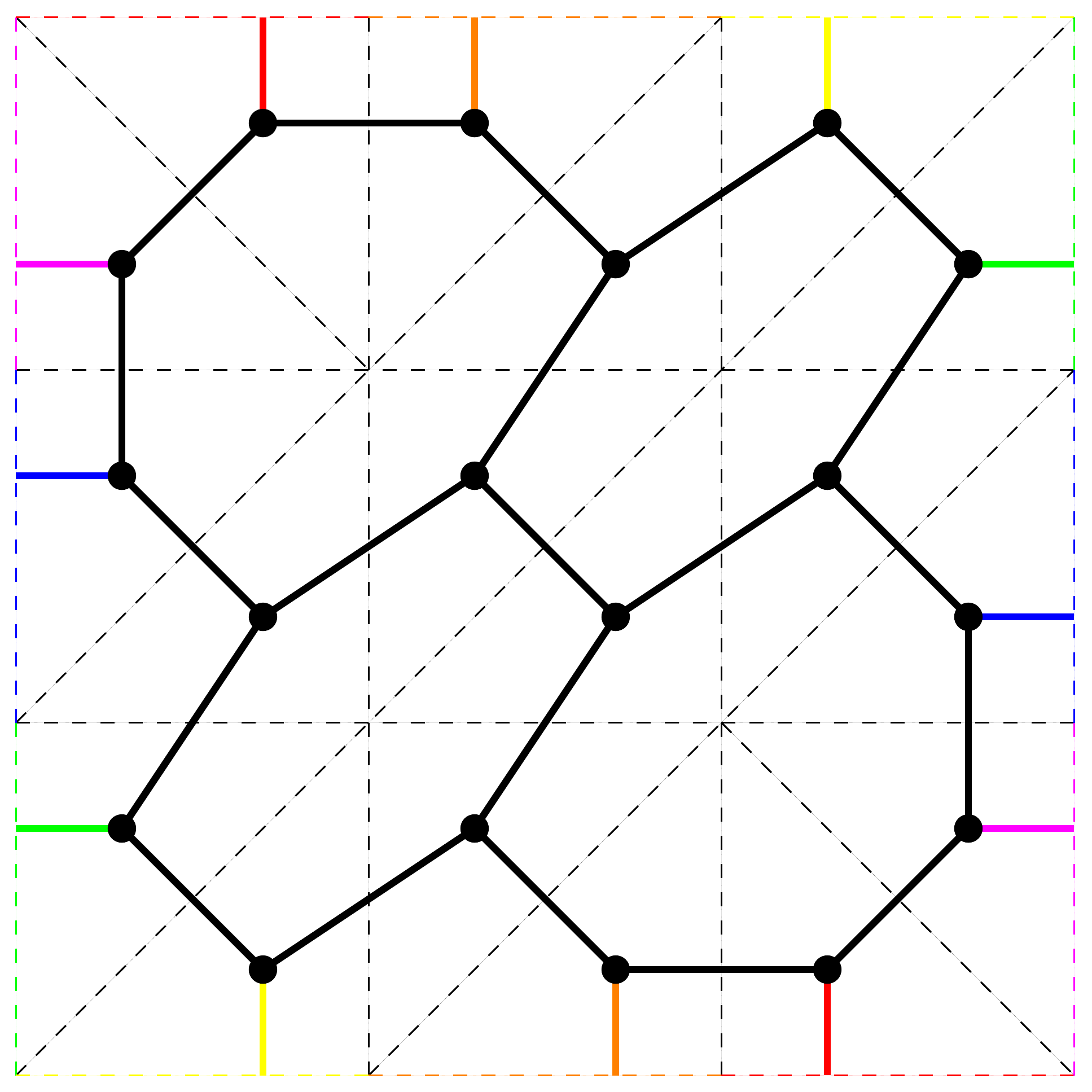}
\caption{The fundamental polygon of $\R P^2$, a triangulation, and the dual graph of the triangulation.}
\label{RP2_Comp_Graph}
\end{figure}

\bibliographystyle{plain}
\bibliography{bibliography}

\end{document}